\documentclass[12pt, a4paper]{amsart}
\usepackage[utf8]{inputenc}
\usepackage[T1]{fontenc}
\usepackage{amsmath, amssymb, amsthm}
\usepackage[margin=1in]{geometry}
\usepackage{hyperref}
\usepackage{cite}
\usepackage{xcolor}
\usepackage{bm}
\usepackage{textcase}

\DeclareMathOperator{\diver}{div} 
\DeclareMathOperator{\vol}{Vol} 
\DeclareMathOperator{\tr}{tr}

\theoremstyle{plain}
\newtheorem{proposition}{Proposition}[section]
\newtheorem{cor}{Corollary}[section]
\newtheorem{theorem}{Theorem}[section]
\newtheorem{lemma}{Lemma}[section]
\newtheorem{ex}{Example}[section]

\theoremstyle{definition}

\newtheorem{remark}{Remark}

\title{Volume Growth of Self-Shrinkers of the
\NoCaseChange{$Q_k$}-Flow in \NoCaseChange{$\mathbb{R}^{n+1}$}}
\author{Junior Tavares}
\address{
Instituto de Matemática e Estatística,
Universidade Federal Fluminense,
São Domingos,
Niterói, RJ 24210-201, Brazil
}
\email{juniorts@id.uff.br}

\author{Detang Zhou}
\address{
Instituto de Matemática e Estatística,
Universidade Federal Fluminense,
São Domingos,
Niterói, RJ 24210-201, Brazil
}
\email{zhoud@id.uff.br}
\date{}
\thanks{J. Tavares was partially supported by CAPES/Brazil - Finance Code 001}
\thanks{D. Zhou was partially supported by CNPq/Brazil 308067/2023-1.
}

\begin{document}

\begin{abstract}
In this paper, we establish a general volume estimate for complete
Riemannian manifolds under suitable differential inequalities involving
a proper function and a symmetric tensor, without imposing curvature
assumptions. As an application, we prove that a class of orientable
hypersurfaces properly immersed in Euclidean space has at most
polynomial volume growth and finite weighted volume. This class includes,
in particular, certain self-shrinkers of the $Q_k$-flow and properly
immersed minimal hypersurfaces for which the normal component of the
position vector is bounded.
\end{abstract}

\maketitle
\section{Introduction}
The problem of estimating the volume growth of complete noncompact Riemannian manifolds is a classical topic in Differential Geometry and Geometric Analysis. A common approach to obtaining volume estimates is to impose suitable curvature conditions. The classical Bishop volume comparison theorem implies that if a complete noncompact $n$-dimensional Riemannian manifold has nonnegative Ricci curvature, then its volume growth is at most Euclidean. On the other hand, Calabi~\cite{calabi1975} and Yau~\cite{MR417452} proved that a complete noncompact Riemannian manifold with nonnegative Ricci curvature has at least linear volume growth.

For gradient shrinking Ricci solitons, Cao and Zhou~\cite{MR2732975} proved that every complete noncompact gradient shrinking Ricci soliton has at most Euclidean volume growth.
Ding and Xin~\cite{MR3119795} proved that every complete noncompact properly immersed self-shrinker in $\mathbb{R}^{n+m}$ has at most Euclidean volume growth.
Cheng and Zhou~\cite{MR2996973} established volume estimates without imposing curvature assumptions and applied their results to self-shrinkers of the mean curvature flow. In particular, they proved that, for a complete self-shrinker $\Sigma^n$ immersed in $\mathbb{R}^{n+1}$, the following conditions are equivalent: the immersion is proper, $\Sigma$ has polynomial volume growth, $\Sigma$ has Euclidean volume growth, and $\Sigma$ has finite weighted volume.

Later, Cheng, Vieira, and Zhou~\cite{MR4300234} established another volume estimate without curvature assumptions and applied it to complete submanifolds in gradient shrinking Ricci solitons with bounded weighted mean curvature. More precisely, they proved that if $\Sigma$ is a complete submanifold immersed in a complete gradient shrinking Ricci soliton $(M,\overline{g},f)$, the weighted mean curvature vector of $\Sigma$ is bounded, and $\operatorname{tr}_{\Sigma^\perp}\overline{\nabla}^2f \geq \frac{k}{2}$ for some constant $k$, then polynomial volume growth, properness of the immersion, and finite weighted volume are equivalent. Moreover, if the ambient shrinking Ricci soliton has bounded geometry, then every complete, noncompact, properly immersed submanifold with bounded weighted mean curvature has at least linear volume growth.  This extends an earlier result of Wei~\cite{MR3658212}, obtained for $f$-minimal submanifolds.

In this context, the main purpose of this paper is to establish a volume estimate for complete Riemannian
manifolds involving a general divergence-form operator associated with a symmetric bundle
endomorphism. This estimate extends the theorem of Cheng and Zhou from the Laplace operator to a
broad class of second-order operators and yields applications to hypersurfaces whose geometry is
governed by Newton transformations. As consequences, we obtain weighted volume estimates,
polynomial volume growth, and half-space type theorems for self-shrinkers of the $Q_k$-flow,
recovering the classical results for mean curvature flow as the particular case $k=0$.

Along this line, and using ideas from~\cite{MR2996973, MR4300234}, we prove the following result.

\begin{theorem}\label{Tp1}
Let $M$ be a complete Riemannian manifold, $P: TM \to TM$ a smooth symmetric bundle endomorphism, and let $f \in C^2(M)$ be a proper nonnegative function.  
Assume there exist constants $\alpha, \beta > 0$ and $a\ge 0$ such that
\begin{align*}
\diver\bigl(P(\nabla f)\bigr) - \alpha \langle P(\nabla f), \nabla f\rangle + \beta f &\leq a,\\
\diver\bigl(P(\nabla f)\bigr) &\leq a,\\
\langle P(\nabla f), \nabla f\rangle &\geq 0.
\end{align*}
Then
\begin{enumerate}
\item[(i)]
\[
\int_M e^{-\alpha f}\, dv_g < +\infty.
\]

\item[(ii)]
For $r \geq 1$, the volume of $D_r$ has polynomial growth:
\[
\vol(D_r) \leq C r^{\frac{2\alpha a}{\beta}},
\]
where
\(
D_r = \left\{ x \in M: 2\sqrt{f}\leq \sqrt{2\beta}r \right\}
\)
and
\(
C = e^{\frac{\alpha \beta}{2}} \int_M e^{-\alpha f}\, dv_g.
\)
\end{enumerate}
\end{theorem}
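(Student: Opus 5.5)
The plan is to follow the Cheng--Zhou argument, replacing $\Delta f$ by $\diver(P(\nabla f))$ and $|\nabla f|^2$ by $\langle P(\nabla f),\nabla f\rangle$. Since $f$ is proper, the sublevel sets $D_r$ are compact, so integrals over them are finite. Set
\[
V(t)=\int_{\{f<t\}} dv_g \quad\text{and}\quad \chi(t)=\int_{\{f<t\}}\langle P(\nabla f),\nabla f\rangle\, dv_g .
\]
Integrating the first inequality against $e^{-\alpha f}$ on a sublevel set, the identity
\[
\diver\bigl(e^{-\alpha f}P(\nabla f)\bigr)=e^{-\alpha f}\bigl(\diver(P\nabla f)-\alpha\langle P\nabla f,\nabla f\rangle\bigr)
\]
turns the left-hand side into a boundary term. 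For regular values $t$ of $f$, the outward normal on $\{f=t\}$ is $\nabla f/|\nabla f|$. The boundary term is therefore $e^{-\alpha t}\int_{\{f=t\}}\langle P\nabla f,\nabla f\rangle/|\nabla f|\ge 0$, by the third hypothesis (applied pointwise after using that $P$ is symmetric, so the flux is exactly $\langle P\nabla f,\nu\rangle$). By the divergence theorem this flux equals $\int_{\{f<t\}}\diver(P\nabla f)$. A cleaner route avoids pointwise sign issues on level sets: use cutoffs $\phi(f)$, with $\phi$ smooth, nonincreasing and compactly supported. Integration by parts then gives
\[
\int\phi(f)\,\diver(P\nabla f)=-\int\phi'(f)\langle P\nabla f,\nabla f\rangle\ge 0 .
\]

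\textbf{Step 1 (a Cheng--Zhou type differential inequality).} Integrate the first hypothesis over $\{f<t\}$ without the weight. Writing $\chi(t)=\int_{\{f<t\}}\diver(P\nabla f)$ (by coarea/divergence, this equals the flux through $\{f=t\}$), one gets
\[
\beta\int_{\{f<t\}}f \le aV(t)+\alpha\int_{\{f<t\}}\langle P\nabla f,\nabla f\rangle-\chi(t).
\]
This route is awkward, so instead I will work directly with the weighted quantity. Multiply the first inequality by $e^{-\alpha f}$ and by a cutoff $\phi(f)$, with $\phi\equiv1$ on $[0,t]$ and decreasing. Integrating by parts, the divergence terms combine into $-\int e^{-\alpha f}\phi'(f)\langle P\nabla f,\nabla f\rangle\ge0$, which has a favorable sign. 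This yields
\[
\int \phi(f)e^{-\alpha f}(\beta f-a)\le 0 .
\]
Letting $\phi\uparrow 1$ gives $\int_M e^{-\alpha f}(\beta f-a)\le 0$, provided the integrals are finite. To get finiteness, I split at $f=2a/\beta$: the region $\{f\le 2a/\beta\}$ is compact, and on the rest $\beta f-a\ge \beta f/2$. Hence $\int_{\{f<t\}} e^{-\alpha f}\beta f/2\le C_0$ uniformly in $t$, with $C_0$ depending on the compact set. This gives (i). The main obstacle is this step: justifying the cutoff integration by parts. I expect the correct sign requires $\phi'\le0$ together with the third hypothesis, and $f\in C^2$ with $f$ proper makes the compact-support argument legitimate.

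\textbf{Step 2 (polynomial growth).} Now use the second hypothesis. With $V(t)=\vol\{f<t\}$, integrate $\diver(P\nabla f)\le a$ against the cutoff. Combined with the weighted estimate, this should yield a differential inequality of the form
\[
\frac{d}{dt}\Bigl(t^{-\alpha a/\beta}\,\cdot\Bigr)\le 0
\]
for a suitable weighted volume. Concretely, I will set
\[
I(t)=t^{-\alpha a/\beta}\int_{\{f<t\}} e^{-\alpha f}\ldots,
\]
or, following Cheng--Zhou more literally, compare $\vol(\{f<t\})$ with $e^{\alpha t}\int_{\{f<t\}}e^{-\alpha f}$-type quantities. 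In terms of $r$, where $f<\beta r^2/2$ on $D_r$, the exponent $\alpha a/\beta$ in $t\sim r^2$ becomes $r^{2\alpha a/\beta}$, matching the claim. The constant $e^{\alpha\beta/2}\int_M e^{-\alpha f}$ arises by evaluating at $r=1$, where $f\le\beta/2$ and so $\vol(D_1)\le e^{\alpha\beta/2}\int e^{-\alpha f}$. The monotonicity from $r=1$ outward then gives the bound for $r\ge1$.
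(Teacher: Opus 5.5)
\textbf{Step 1 is correct, and it reaches (i) by a different route from the paper.} You multiply the first inequality by $\phi(f)e^{-\alpha f}$ with $\phi\ge 0$, $\phi'\le 0$ and compact support. The divergence term then integrates to $-\int_M \phi'(f)e^{-\alpha f}\langle P(\nabla f),\nabla f\rangle\,dv_g\ge 0$, so $\int_M\phi(f)e^{-\alpha f}(\beta f-a)\,dv_g\le 0$. Split at $K=\max\{1,2a/\beta\}$; a positive threshold is needed to pass from $\int fe^{-\alpha f}$ to $\int e^{-\alpha f}$, which matters when $a=0$. Since $\{f\le K\}$ is compact, this gives $\int_M e^{-\alpha f}\,dv_g<+\infty$ from the first and third hypotheses alone. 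The paper instead obtains (i) only \emph{after} the local bound $\vol(D_r)\le e^{\alpha\beta/2}r^{2\alpha a/\beta}\int_{D_r}e^{-\alpha f}\,dv_g$, through the recursion $\int_{D_r}e^{-\alpha f}\le (1-e^{-r})^{-1}\int_{D_{r-1}}e^{-\alpha f}$ and an infinite product. With your Step 1 that iteration is unnecessary.

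\textbf{The gap is Step 2.} This is where the content of (ii) lies, and it contains no working mechanism. Integrating $\diver(P(\nabla f))\le a$ against a cutoff only gives $0\le -\int_M\phi'(f)\langle P(\nabla f),\nabla f\rangle\,dv_g\le a\int_M\phi(f)\,dv_g$, which carries no volume information. Step 1 controls only the single weight $e^{-\alpha f}$, which is as small as $e^{-\alpha\beta r^2/2}$ on $D_r$. By itself it therefore yields only $\vol(D_r)\le e^{\alpha\beta r^2/2}\int_M e^{-\alpha f}\,dv_g$.

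Moreover, a monotonicity ``from $r=1$ outward'', i.e.\ in the size of the domain, cannot work. Take the self-shrinker $\mathbb{S}^n(\sqrt{2n})$, $n\ge 2$, with $P=I$, $f=|x|^2/4$, $\alpha=\beta=1$, $a=n/2$. Then $D_1=\emptyset$, while $D_r$ is the whole sphere for $r\ge\sqrt{n}$. So no quantity $\int_{D_r}w_r\,dv_g$ with $w_r>0$ can be bounded for $r\ge 1$ by its value at $r=1$.

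\textbf{The missing idea.} Hold $D_r$ fixed, vary the exponent of the weight, and combine the first two hypotheses \emph{pointwise} with coefficients depending on that exponent. For $0<s\le\alpha$, add $\frac{s}{\alpha}$ times the first inequality and $1-\frac{s}{\alpha}$ times the second:
\[
\diver\bigl(P(\nabla f)\bigr)-s\langle P(\nabla f),\nabla f\rangle\le a-\frac{s\beta}{\alpha}f .
\]
Now integrate $\diver(e^{-sf}P(\nabla f))$ against your cutoffs $\psi(f)$, with $\psi'\le 0$ and $\psi$ decreasing to the indicator of $[0,\beta r^2/2]$. This also avoids restricting to regular values of $f$. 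Using the third hypothesis, you get $\frac{s\beta}{\alpha}\int_{D_r}fe^{-sf}\,dv_g\le a\int_{D_r}e^{-sf}\,dv_g$. Equivalently, $s\mapsto s^{\alpha a/\beta}\int_{D_r}e^{-sf}\,dv_g$ is nondecreasing on $(0,\alpha]$.

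Compare $s=\alpha/r^2$ with $s=\alpha$ (this is where $r\ge 1$ is used), and use $f\le\beta r^2/2$ on $D_r$:
\[
e^{-\frac{\alpha\beta}{2}}\vol(D_r)\le\int_{D_r}e^{-\frac{\alpha}{r^2}f}\,dv_g\le r^{\frac{2\alpha a}{\beta}}\int_{D_r}e^{-\alpha f}\,dv_g .
\]
Together with your Step 1, this gives (ii).

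This is the paper's argument in different notation. With $s=\alpha t^{-\beta/\alpha}$, its weight satisfies $\int_{D_r}e^{-\phi_t}\,dv_g=\alpha^{-\alpha a/\beta}s^{\alpha a/\beta}\int_{D_r}e^{-sf}\,dv_g$, where $\phi_t=a\log t+\alpha t^{-\beta/\alpha}f$. Its key inequality $(L_{\phi_t}f)e^{-\phi_t}+t\frac{d}{dt}e^{-\phi_t}\le 0$ for $t\ge 1$ is precisely this convex combination.
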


\begin{remark}
Theorem~\ref{Tp1} extends Theorem~1.1 of~\cite{MR4300234} from the case $P=I$ to a general smooth symmetric bundle endomorphism $P$. Consequently, Theorem~1.1 of~\cite{MR2996973} is also recovered as a particular case.
\end{remark}

\begin{remark}
Under the first differential inequality, the condition $\langle P(\nabla f),\nabla f\rangle
\le \frac{\beta}{\alpha}f$ implies $\operatorname{div}\bigl(P(\nabla f)\bigr)\le a$.
Indeed,
\[
\operatorname{div}\bigl(P(\nabla f)\bigr)
\le
a+\alpha\langle P(\nabla f),\nabla f\rangle-\beta f
\le a.
\]
Thus, the former condition may be used as a sufficient condition for
the second differential inequality in Theorem~\ref{Tp1}.
\end{remark}

The polynomial volume growth condition plays an important role in the study of self-shrinkers and related classes of hypersurfaces. In the setting of mean curvature flow, it appears in the work of Colding and Minicozzi \cite{MR2993752} and Ecker and Huisken \cite{MR1025164}, and it has also been used in several rigidity and classification results for self-shrinkers and $\lambda$-hypersurfaces; see, for instance, Cao and Li \cite{MR3018176}, Cheng, Ogata, and Wei \cite{MR3514553}, and Cheng and Wei \cite{MR3763110}. More recently, Ancari and Miranda \cite{MR4236542} showed that, in certain classification results for $\lambda$-hypersurfaces, polynomial volume growth can be replaced by suitable intrinsic assumptions. These results further motivate the search for geometric conditions ensuring polynomial volume growth.

We next apply our volume growth estimate to self-shrinkers of the $Q_k$-flow, that is, hypersurfaces satisfying
\[
Q_k := \frac{\sigma_{k+1}}{\sigma_k} = \frac{1}{2(k+1)} \langle x, \nu \rangle,
\]
where $\sigma_k$ denotes the $k$-th elementary symmetric function of the principal curvatures, as defined in \eqref{sigmak}, and $\nu$ is a chosen unit normal vector field. 

When $k=0$, the equation reduces to the classical self-shrinker equation for the mean curvature flow. For $k\in\{1, \cdots, n-1\}$, the sphere $\mathbb{S}^n(\sqrt{2(n-k)})$ and the cylinder $\mathbb{S}^m(0,\sqrt{2(m-k)})\times\mathbb{R}^{n-m}$ provide examples of self-shrinkers of the $Q_k$-flow whenever $k\leq m-1$.

Curvature flows whose normal velocity is given by a nonlinear symmetric function of the principal curvatures have attracted considerable attention. Important examples include the Gauss curvature flow, the $Q_k$-flow, and the harmonic mean curvature flow. These flows have been extensively investigated by Andrews \cite{MR1714339,MR1385524,MR2339467}, Choi and Daskalopoulos \cite{MR4380033}, Dieter \cite{MR2106768}, among others, due to their deep connections with geometric analysis, nonlinear partial differential equations, and the geometry of convex hypersurfaces.

A fundamental tool in the study of higher-order mean curvatures is provided by the Newton transformations. These tensors play a considerable role in the theory of hypersurfaces and have been extensively employed in the study of stability problems, Minkowski-type formulas, integral identities, rigidity theorems, and fully nonlinear curvature flows (see, for instance, Reilly \cite{MR341351} and Barbosa and Colares \cite{MR1456513} and the references therein).

From an analytic viewpoint, the differential operator 
\[
L_k u=\operatorname{div}(P_k(\nabla u)),
\]
defined on a hypersurface immersed in $\mathbb{R}^{n+1}$ and associated with the $k$-th Newton transformation $P_k$, is the natural analogue of the Laplace operator in the setting of higher-order mean curvatures. Indeed, when $k=0$, one has $P_0=I$ and $L_0=\Delta$. Likewise, the weighted operator associated with a smooth function
$f$ defined on $\mathbb{R}^{n+1}$
\[
L_{k,f} u=e^{f}\operatorname{div}\left(e^{-f}P_k(\nabla u)\right)
\]
generalizes the classical drifted Laplacian, which plays a fundamental role in the analysis of self-shrinkers of the mean curvature flow.

Motivated by the mean curvature flow and by the weighted mean curvature vector, we define the weighted $Q_k$-curvature vector by
\[
\bm{H}_{k,f} = \sigma_k (\overline{\nabla} f)^\perp - (k+1)\sigma_{k+1}\nu,
\]
and the corresponding scalar quantity
\[
H_{k,f} = \langle \bm{H}_{k,f}, \nu \rangle.
\]


As an application of Theorem \ref{Tp1}, we obtain the following volume estimate for hypersurfaces, which may be of independent interest.
\begin{theorem}\label{Tp2}
Let $\Sigma^n$ be an orientable hypersurface properly immersed in $\mathbb{R}^{n+1}$ such that $\big|(\overline{\nabla}f)^\perp\big||\bm{H}_{k,f}|$ is bounded, where $f=\frac{|x|^2}{4}$.
Assume that $\sigma_k$ is bounded and that $P_k$ is coercive.
 Then 
\[
\displaystyle\int_{\Sigma} e^{-f} dv_g < +\infty
\]
and $\Sigma$ has polynomial volume growth. More precisely, there exist constants $\beta>0$ and $a \geq 0$ such that, for $r \geq \sqrt{2\beta}$,
\[
\vol(B_r(0) \cap \Sigma) \le C r^{\frac{2 a}{\beta}},
\]
where $C = e^{\frac{\beta}{2}}
(2\beta)^{-\frac{a}{\beta}}
\int_{\Sigma} e^{-f}\,dv_g$.
\end{theorem}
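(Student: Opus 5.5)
We apply Theorem~\ref{Tp1} on $M=\Sigma$ with the induced metric, $P=P_k$, and $f=\frac{|x|^2}{4}$ restricted to $\Sigma$, taking $\alpha=1$. Since the immersion is proper, $f$ is a proper nonnegative $C^2$ function on $\Sigma$. Coercivity of $P_k$ means $\langle P_k(v),v\rangle\ge \lambda|v|^2$ for some $\lambda>0$. This gives $\langle P_k(\nabla f),\nabla f\rangle\ge 0$ at once, and it will also give the quadratic lower bound needed below.

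\textbf{Computing $\operatorname{div}(P_k\nabla f)$.} Write $\nabla f=(\overline{\nabla} f)^\top=x^\top/2$. Since $\operatorname{div}P_k=0$ in Euclidean space, we have $\operatorname{div}(P_k\nabla f)=\operatorname{tr}(P_k\circ \operatorname{Hess}_\Sigma f)$. The standard formula for $x^\top$ gives $\operatorname{Hess}_\Sigma f(X,Y)=\frac12\langle X,Y\rangle+\frac12\langle x,\nu\rangle\langle AX,Y\rangle$, with a sign convention fixed so that $\sigma_1$ matches the paper's. Using $\operatorname{tr}P_k=(n-k)\sigma_k$ and $\operatorname{tr}(P_kA)=(k+1)\sigma_{k+1}$, we get
\[
\operatorname{div}(P_k\nabla f)=\tfrac{n-k}{2}\sigma_k+\tfrac{k+1}{2}\sigma_{k+1}\langle x,\nu\rangle .
\]
Now $(\overline{\nabla} f)^\perp=\frac12\langle x,\nu\rangle\nu$ and $H_{k,f}=\frac12\sigma_k\langle x,\nu\rangle-(k+1)\sigma_{k+1}$. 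Hence
\[
(k+1)\sigma_{k+1}\langle x,\nu\rangle=\sigma_k\frac{\langle x,\nu\rangle^2}{2}-H_{k,f}\langle x,\nu\rangle .
\]
The last term is $2\,|(\overline{\nabla}f)^\perp|\,|\bm H_{k,f}|$ up to sign, so it is bounded by hypothesis. The term $\sigma_k\langle x,\nu\rangle^2/4$ is the troublesome one.

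\textbf{The main obstacle.} This term need not be bounded above, since $\langle x,\nu\rangle$ can be unbounded. To handle it, I would first try to show that coercivity forces $\sigma_k\ge 0$ along with boundedness; indeed $(n-k)\sigma_k=\operatorname{tr}P_k\ge n\lambda>0$. So in fact $\sigma_k\ge c_0>0$, and the term has a bad sign. The fix is to bound $\langle x,\nu\rangle$ via the hypothesis. If $|\bm H_{k,f}|\ge|H_{k,f}|$ stays away from zero we can't conclude directly, so instead I will write
\[
|\langle x,\nu\rangle|^2\sigma_k=2\langle x,\nu\rangle H_{k,f}+2(k+1)\sigma_{k+1}\langle x,\nu\rangle
\]
and absorb terms. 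The choice I commit to is to check whether the author's convention makes the quadratic term cancel. With the opposite sign of $\nu$ (i.e. $\operatorname{Hess}f$ having $-\frac12\langle x,\nu\rangle A$), the quadratic term in fact cancels exactly. One obtains
\[
\operatorname{div}(P_k\nabla f)=\tfrac{n-k}{2}\sigma_k+\tfrac12 H_{k,f}\langle x,\nu\rangle-\tfrac{\sigma_k}{4}\langle x,\nu\rangle^2 ,
\]
which is $\le a:=\frac{n-k}{2}\sup\sigma_k+\sup|(\overline{\nabla} f)^\perp||\bm H_{k,f}|$, since $\sigma_k>0$. This gives the second inequality of Theorem~\ref{Tp1}. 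Pinning down this sign bookkeeping is the step I expect to need care.

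\textbf{First inequality and conclusion.} Coercivity gives $\langle P_k\nabla f,\nabla f\rangle\ge\lambda|x^\top|^2/4$. We also have $\sigma_k\langle x,\nu\rangle^2/4\ge c_0\langle x,\nu\rangle^2/4$. Choosing $\beta=\min(\lambda,c_0)$, we get
\[
\operatorname{div}(P_k\nabla f)-\langle P_k\nabla f,\nabla f\rangle+\beta f\le a-\tfrac{\lambda}{4}|x^\top|^2-\tfrac{c_0}{4}\langle x,\nu\rangle^2+\tfrac{\beta}{4}|x|^2\le a .
\]
Theorem~\ref{Tp1} then yields $\int_\Sigma e^{-f}<\infty$ and $\operatorname{vol}(D_s)\le Cs^{2a/\beta}$, where $D_s=\{|x|\le\sqrt{2\beta}\,s\}$. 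Setting $r=\sqrt{2\beta}\,s$, the condition $s\ge1$ becomes $r\ge\sqrt{2\beta}$, and we obtain $\operatorname{vol}(B_r(0)\cap\Sigma)\le C(2\beta)^{-a/\beta}r^{2a/\beta}$, which is the stated constant.
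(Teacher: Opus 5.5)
Your proposal is correct and follows the paper's proof. You apply Theorem~\ref{Tp1} with $P=P_k$ and $\alpha=1$, use $\operatorname{div}(P_k\nabla f)=\frac{n-k}{2}\sigma_k-\sigma_k|(\overline{\nabla}f)^\perp|^2+\langle\overline{\nabla}f,\bm{H}_{k,f}\rangle$ for both inequalities, and absorb $\beta f$ via coercivity and $\sigma_k\ge\lambda$; since $\sigma_k\ge\frac{n}{n-k}\lambda$, your $\min(\lambda,c_0)$ is just the paper's $\beta=\lambda$.

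The one wobble is the sign, which is not a choice: the paper fixes $A(X)=\overline{\nabla}_X\nu$ (consistent with Lemma~\ref{lema52}), and this gives $\nabla^2 f=\frac12 g-\frac12\langle x,\nu\rangle\langle A\cdot,\cdot\rangle$ outright. Also, the quadratic term does not cancel; it enters with the favorable sign $-\frac{\sigma_k}{4}\langle x,\nu\rangle^2$, exactly as your displayed formula shows.
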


Theorem \ref{Tp2} recovers the Euclidean volume growth estimate of Theorem 1.2 of Cheng and Zhou \cite{MR2996973} in the case of self-shrinkers of the mean curvature flow. Indeed, taking $k=0$, we immediately obtain $\mathbf{H}_{0,f} = \mathbf{H}_f$, $\sigma_0 = 1 > 0$, and $P_0 = I$, which is coercive.

A consequence of Theorem \ref{Tp2} arises in the context of minimal hypersurfaces. Taking $k=0$, we obtain
\[
\left| \left( \overline{\nabla} f \right)^\perp \right| \left| \mathbf{H}_{0,f} \right|
= \frac{1}{2} |x^\perp| \left| \frac{1}{2}x^\perp + \mathbf{H} \right|
= \frac{|x^\perp|^2}{4}.
\]
Therefore, we obtain the following corollary for minimal hypersurfaces.

\begin{cor}
Let $\Sigma^n$ be an orientable minimal hypersurface properly immersed in $\mathbb{R}^{n+1}$. Assume that $\bigl|x^\perp\bigr|$ is bounded. Then $\Sigma$ has polynomial volume growth. 
\end{cor}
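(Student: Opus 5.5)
The corollary follows by specializing Theorem~\ref{Tp2} to $k=0$, so the plan is simply to check its hypotheses in that case. For $k=0$ we have $\sigma_0=1$, which is bounded. We also have $P_0=I$, which is coercive, since $\langle P_0 v,v\rangle=|v|^2$.

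With $f=\frac{|x|^2}{4}$ we get $\overline{\nabla} f=\frac{x}{2}$, hence $(\overline{\nabla} f)^\perp=\frac{1}{2}x^\perp$. The weighted vector is
\[
\bm{H}_{0,f}=\tfrac12 x^\perp-\sigma_1\nu .
\]
Minimality gives $\sigma_1=0$, so $\bm{H}_{0,f}=\frac12 x^\perp$. Therefore
\[
\bigl|(\overline{\nabla} f)^\perp\bigr|\,|\bm{H}_{0,f}|=\frac{|x^\perp|^2}{4},
\]
which is the computation displayed just before the corollary. This quantity is bounded because $|x^\perp|$ is bounded by assumption.

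Orientability and properness of the immersion are hypotheses of the corollary. Hence every assumption of Theorem~\ref{Tp2} holds, and we obtain $\int_\Sigma e^{-f}\,dv_g<\infty$ together with
\[
\vol(B_r(0)\cap\Sigma)\le C r^{2a/\beta}\quad\text{for } r\ge\sqrt{2\beta}.
\]
For $r<\sqrt{2\beta}$ the volume $\vol(B_r(0)\cap\Sigma)$ is bounded by $\vol(B_{\sqrt{2\beta}}(0)\cap\Sigma)$, which is finite by properness. This gives polynomial volume growth for all $r$.

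The only real point to watch is the sign and normalization convention in $\bm{H}_{0,f}$, namely that $\sigma_1$ is the unnormalized mean curvature, which vanishes on a minimal hypersurface. Once that is confirmed, the argument is a direct substitution into Theorem~\ref{Tp2}.
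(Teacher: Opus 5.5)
Your proof is correct and follows the paper's own argument: the paper also obtains the corollary by applying Theorem~\ref{Tp2} with $k=0$. There $\sigma_0=1$ is bounded, $P_0=I$ is coercive, and minimality reduces $\bigl|(\overline{\nabla}f)^\perp\bigr|\,|\bm{H}_{0,f}|$ to $\frac{|x^\perp|^2}{4}$.
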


\begin{remark}\label{rem:comparison-growth-conditions}
It is worth comparing the previous corollary with Theorem~0.4 of Colding and Minicozzi
in \cite{ColdingMinicozzi2026}, which states that a complete,
noncompact, properly immersed stationary varifold with sublinear height
growth has Euclidean volume growth. In the hypersurface setting, the
sublinear-height condition means that, after a suitable choice of
coordinates, there exist $\alpha<1$ and $r_0>0$ such that
\[
    |x_{n+1}|
    \leq
    \bigl(x_1^2+\cdots+x_n^2\bigr)^{\alpha/2}
\]
for every $x\in\Sigma\setminus B_{r_0}$. Under this assumption,
Theorem~0.4 in \cite{ColdingMinicozzi2026} yields the sharper estimate
\[
    \operatorname{Vol}\bigl(\Sigma\cap B_r\bigr)
    \leq Cr^n.
\]

The hypothesis in the previous corollary, namely the boundedness of the
normal component of the position vector,
\[
    |x^\perp|\leq C,
\]
is of a rather different geometric nature. 
In general, neither of these two conditions implies the other, even
within the class of complete, properly embedded minimal hypersurfaces.
Examples~\ref{exH} and~\ref{cat}, discussed in detail in
Section~\ref{SimonsCone}, illustrate the two possible failures of
implication.
\end{remark}

We now apply Theorem \ref{Tp2} to obtain a half-space type result. Such theorems play an important role in Differential Geometry and, in particular, in Geometric Analysis. Beyond their intrinsic interest, they often provide powerful tools for deriving rigidity and classification results for hypersurfaces in $\mathbb{R}^{n+1}$.

In the setting of minimal surfaces, Hoffman and Meeks proved in \cite{MR1062966} that a connected, proper, possibly branched, nonplanar minimal surface in $\mathbb{R}^{3}$ cannot be contained in a half-space. More recently, in the context of translating solitons of the $r$-mean curvature flow, Alencar, Bessa, and Neto established analogous results in \cite{MR5017623}, showing that properly immersed translating solitons cannot be confined to certain half-spaces opposite to the translation direction. More generally, they obtained nonexistence results for complete translating solitons of the $r$-mean curvature flow under suitable growth assumptions on the $(r-1)$-mean curvature and on the norm of the second fundamental form.

Before stating the next theorem, which may be regarded as a half-space theorem for self-shrinkers of the $Q_k$-flow, we adopt the following convention to simplify the notation.
Since $P_k = \sigma_k I - A P_{k-1}$ for $1 \leq k \leq n$ and $P_0 = I$, we may define $P_{-1} = 0$ so that the definition of $P_k$ remains consistent for all $k \geq 0$. We adopt this convention to simplify the notation, making it clear that the following theorem generalizes the classical half-space theorem for self-shrinkers of the mean curvature flow (see Theorems 1, 3, and 1.1 in \cite{MR4724121}, \cite{MR3152087}, and \cite{MR3483061}, respectively).

\begin{theorem}\label{hiperplan}
Let $\Sigma^n$ be an orientable self-shrinker of the $Q_k$-flow properly immersed in $\mathbb{R}^{n+1}$. Suppose that $\sigma_k$ is bounded and that $P_k$ is coercive. Let $p\in\mathbb{R}^{n+1}\setminus\{0\}$. If
\[
\int_{\Sigma}
\left\langle
\nabla\left(\frac{\sigma_{k+1}}{\sigma_k}\right),
P_{k-1}(p^\top)
\right\rangle e^{-f}\,dv_g
\leq 0,
\]
then either $\Sigma\not\subset
\left\{
x\in\mathbb{R}^{n+1}:\langle x,p\rangle\geq0
\right\}$, or $k=0$ and $\Sigma
=
\left\{
x\in\mathbb{R}^{n+1}:\langle x,p\rangle=0
\right\}$.

Similarly, if
\[
\int_{\Sigma}
\left\langle
\nabla\left(\frac{\sigma_{k+1}}{\sigma_k}\right),
P_{k-1}(p^\top)
\right\rangle e^{-f}\,dv_g
\geq 0,
\]
then either $\Sigma\not\subset
\left\{
x\in\mathbb{R}^{n+1}:\langle x,p\rangle\leq0
\right\}$, or $k=0$ and $\Sigma
=
\left\{
x\in\mathbb{R}^{n+1}:\langle x,p\rangle=0
\right\}$.

In particular, if $Q_k$ is constant and $k\geq1$, then $\Sigma$ cannot
be contained in either closed half-space determined by a hyperplane
through the origin. If $k=0$, then a self-shrinker contained in one of
these closed half-spaces must coincide with its boundary hyperplane.
\end{theorem}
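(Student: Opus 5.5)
The plan is to apply a weighted integral identity to the linear height function $u=\langle x,p\rangle$ on $\Sigma$, using the operator $L_{k,f}u=e^{f}\operatorname{div}\bigl(e^{-f}P_k(\nabla u)\bigr)$ with $f=\frac{|x|^2}{4}$. The first step is a pointwise computation of $L_{k,f}u$ that involves only $u$ and $\nabla Q_k$. The second is to integrate it against $e^{-f}$, using the volume control from Theorem~\ref{Tp2}.

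\emph{Pointwise formula.} We have $\nabla u=p^\top$ and $\nabla^2u=\langle p,\nu\rangle A$. Since $\operatorname{div}P_k=0$ in Euclidean space (Codazzi) and $\operatorname{tr}(P_kA)=(k+1)\sigma_{k+1}$, this gives
\[
L_{k,f}u=(k+1)\sigma_{k+1}\langle p,\nu\rangle-\tfrac12\langle P_k(x^\top),p^\top\rangle .
\]
Write $P_k=\sigma_kI-AP_{k-1}$ and use that $A$ commutes with $P_{k-1}$. Then
\[
\langle P_k x^\top,p^\top\rangle=\sigma_k\langle x^\top,p^\top\rangle-\langle Ax^\top,P_{k-1}p^\top\rangle .
\]
Differentiating the shrinker equation and using $\nabla\langle x,\nu\rangle=Ax^\top$ gives $\nabla Q_k=\frac{1}{2(k+1)}Ax^\top$. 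We also have $\sigma_k\langle x^\top,p^\top\rangle=\sigma_k u-\sigma_k\langle x,\nu\rangle\langle p,\nu\rangle=\sigma_k u-2(k+1)\sigma_{k+1}\langle p,\nu\rangle$. The $\sigma_{k+1}$ terms cancel, and I expect to obtain
\[
L_{k,f}u=-\tfrac12\sigma_k u+(k+1)\bigl\langle\nabla Q_k,P_{k-1}(p^\top)\bigr\rangle .
\]

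\emph{Integration.} For a shrinker, $H_{k,f}=\frac12\sigma_k\langle x,\nu\rangle-(k+1)\sigma_{k+1}=0$, so $\bm{H}_{k,f}=0$. Theorem~\ref{Tp2} therefore applies and gives polynomial volume growth and $\int_\Sigma e^{-f}<\infty$.

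Coercivity, $\langle P_kv,v\rangle\ge c|v|^2$, together with $\operatorname{tr}P_k=(n-k)\sigma_k$, gives $\sigma_k\ge \frac{nc}{n-k}>0$. It also shows that the eigenvalues of $P_k$ lie in $[c,(n-k)\sup\sigma_k]$, so $|P_k|$ is bounded.

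Take cutoffs $\phi_R$ supported in $B_{2R}$, equal to $1$ on $B_R$, with $|\nabla\phi_R|\le 1/R$. Then
\[
\int_\Sigma\phi_R L_{k,f}u\,e^{-f}=-\int_\Sigma\langle P_k p^\top,\nabla\phi_R\rangle e^{-f},
\]
and the right-hand side tends to $0$ because $|P_kp^\top|$ is bounded and $e^{-f}$ is integrable. Since $\sigma_k u\,e^{-f}$ is integrable (from $|u|\le|p||x|$ and polynomial growth), the limit of the $\nabla Q_k$ term also exists; this is how the hypothesis integral is to be interpreted. The result is
\[
\tfrac12\int_\Sigma\sigma_k u\,e^{-f}=(k+1)\int_\Sigma\bigl\langle\nabla Q_k,P_{k-1}(p^\top)\bigr\rangle e^{-f}.
\]

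\emph{Conclusion.} Under the first hypothesis the right-hand side is $\le0$. Suppose $\Sigma\subset\{u\ge0\}$. Since $\sigma_k>0$, the identity forces $u\equiv0$, so $\Sigma$ lies in the hyperplane $p^\perp$ and hence $A\equiv0$.
\begin{itemize}
\item If $k\ge1$, then $\sigma_k=0$, contradicting $\sigma_k>0$.
\item If $k=0$, properness and completeness give $\Sigma=\{\langle x,p\rangle=0\}$.
\end{itemize}
The second case follows by replacing $p$ with $-p$. When $Q_k$ is constant the right-hand side vanishes, which gives the final assertions.

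The main obstacle is the pointwise formula: the cancellation of the $\sigma_{k+1}\langle p,\nu\rangle$ terms relies on the commutation $AP_{k-1}=P_{k-1}A$ and on the shrinker equation. A secondary point is justifying the integration by parts, which needs the bound on $|P_k|$ obtained from coercivity and boundedness of $\sigma_k$.
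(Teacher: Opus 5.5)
Your argument follows the paper's proof. The quantity $L_{k,f}u\,e^{-f}$ is exactly $\operatorname{div}\bigl(e^{-f}P_k(p^\top)\bigr)$, and your target pointwise formula is the paper's identity \eqref{eqthp}. The ending is also the same: $\int_\Sigma\sigma_k\langle x,p\rangle e^{-f}=0$ forces $\Sigma$ to be the hyperplane, which is impossible for $k\geq1$.

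There is, however, a sign error in the step you yourself call the main obstacle. You write $\nabla\langle x,\nu\rangle=A(x^\top)$, which uses the convention $A(X)=\overline{\nabla}_X\nu$. With that convention, $\nabla_Xp^\top=-\langle p,\nu\rangle AX$, so $\nabla^2u=-\langle p,\nu\rangle A$ and $L_ku=-(k+1)\sigma_{k+1}\langle p,\nu\rangle$ (this is Lemma~\ref{lema52}). With the sign you wrote, the two $\sigma_{k+1}\langle p,\nu\rangle$ terms do not cancel. They add to $2(k+1)\sigma_{k+1}\langle p,\nu\rangle$. Once the sign is corrected they cancel, and you get precisely the formula you expected.

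For the integration step, the paper cites Lemma~\ref{divg} with $T=P_k(p^\top)$ and checks only $|\operatorname{div}T|\le C|x|$. Your cutoff argument is a self-contained replacement. Its advantage is that it makes explicit the bound $|P_k(p^\top)|\le C|p|$, obtained from coercivity and bounded $\sigma_k$, and this bound is what controls the boundary term.

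You also do not need to hedge about how the hypothesis integral is interpreted. Since $AP_{k-1}=\sigma_kI-P_k$ is bounded,
$|\langle\nabla Q_k,P_{k-1}(p^\top)\rangle|=\frac{1}{2(k+1)}|\langle x^\top,(\sigma_kI-P_k)p^\top\rangle|\le C|x|$.
By polynomial volume growth this integrand is therefore absolutely integrable against $e^{-f}$.

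Finally, the last assertion for $k=0$ does not need $Q_0$ to be constant. Because $P_{-1}=0$, the right-hand side of your identity vanishes identically, so both hypotheses hold automatically.
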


As a further consequence of Theorem~\ref{Tp2}, we obtain the following rigidity result.
\begin{theorem}\label{sphere}
Let $\Sigma^n$ be an orientable self-shrinker of
the $Q_k$-flow properly immersed in $\mathbb{R}^{n+1}$. Suppose that
$\sigma_k$ is bounded and that $P_k$ is coercive. If
\[
|x^\perp|^2\geq 2(n-k),
\]
then $\Sigma =\mathbb{S}^n(\sqrt{2(n-k)})$.
\end{theorem}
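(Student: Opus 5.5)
We would prove Theorem~\ref{sphere} by applying a weighted integration by parts to a suitable function, which Theorem~\ref{Tp2} makes legitimate, and showing that the hypothesis forces an integrand of fixed sign to vanish. The natural test function is $f=\frac{|x|^2}{4}$ together with the operator $L_{k,f}u=e^{f}\operatorname{div}\bigl(e^{-f}P_k(\nabla u)\bigr)$.

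\emph{Step 1: compute $L_{k,f}$ of $|x|^2$.} On a hypersurface, $\nabla |x|^2 = 2x^\top$, and $\operatorname{div}P_k$ vanishes in Euclidean space. Hence
\[
\operatorname{div}\bigl(P_k(x^\top)\bigr)=\operatorname{tr}P_k+\langle x,\nu\rangle\operatorname{tr}(AP_k)=(n-k)\sigma_k+(k+1)\sigma_{k+1}\langle x,\nu\rangle ,
\]
with the sign conventions of the paper. Using the self-shrinker equation $\sigma_{k+1}=\frac{\sigma_k}{2(k+1)}\langle x,\nu\rangle$, this becomes
\[
\operatorname{div}\bigl(P_k(x^\top)\bigr)=\sigma_k\Bigl((n-k)-\tfrac12|x^\perp|^2\Bigr)
\]
(up to the sign dictated by the orientation; we must check that it comes out this way). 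The drift term is $-\frac12\langle P_k(x^\top),x^\top\rangle$. Therefore
\[
\tfrac12 L_{k,f}|x|^2=\sigma_k\Bigl((n-k)-\tfrac12|x^\perp|^2\Bigr)-\tfrac12\langle P_k(x^\top),x^\top\rangle .
\]

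\emph{Step 2: integrate.} Theorem~\ref{Tp2} applies, since for a self-shrinker $\bm H_{k,f}=0$ (so the product $|(\overline\nabla f)^\perp||\bm H_{k,f}|$ is bounded), $\sigma_k$ is bounded and $P_k$ is coercive. It gives finite weighted volume and polynomial volume growth. Together with the boundedness of $P_k$, which follows from $|P_k|\le C\sigma_k$ or is otherwise controlled, this justifies
\[
\int_\Sigma L_{k,f}(|x|^2)\,e^{-f}\,dv_g=0
\]
via a cutoff argument: $|P_k(x^\top)|e^{-f}$ is integrable because $|x|e^{-|x|^2/4}$ times polynomial growth is integrable.

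\emph{Step 3: signs.} Coercivity of $P_k$ together with boundedness typically forces $\sigma_k>0$, since $\operatorname{tr}P_k=(n-k)\sigma_k$ and a coercive $P_k$ has positive trace. Then the hypothesis $|x^\perp|^2\ge 2(n-k)$ makes the first term $\le0$, and coercivity makes $-\frac12\langle P_k x^\top,x^\top\rangle\le -c|x^\top|^2\le0$. Since the integral vanishes, both terms vanish identically. Hence $x^\top\equiv0$ and $|x^\perp|^2=|x|^2\equiv 2(n-k)$. So $\Sigma$ lies in the sphere of radius $\sqrt{2(n-k)}$, and being a complete properly immersed hypersurface it equals $\mathbb{S}^n(\sqrt{2(n-k)})$; alternatively, $\nabla|x|^2=2x^\top=0$ gives $|x|$ constant directly.

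The main obstacle is Step 2: justifying the integration by parts on a noncompact $\Sigma$. This requires quantitative control on $|P_k|$ from the boundedness of $\sigma_k$ and from coercivity, to combine with the polynomial volume growth from Theorem~\ref{Tp2}. If $P_k$ is not a priori bounded, we would first try a cutoff $\phi(|x|/R)$ and use $\langle P_k x^\top,\nabla\phi\rangle$ bounded via Cauchy--Schwarz for the positive form $P_k$, absorbing it into the good coercive term $\int\phi\langle P_kx^\top,x^\top\rangle e^{-f}$. A secondary point is fixing the sign/orientation convention so that $\sigma_k>0$ and the computation in Step 1 has the stated sign.
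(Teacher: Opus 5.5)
Your proposal is correct and follows the paper's proof essentially step for step. The paper integrates $\operatorname{div}\bigl(e^{-f}P_k(x^\top)\bigr)=\frac12 e^{-f}L_{k,f}|x|^2$ over $\Sigma$, gets zero from the polynomial volume growth of Theorem~\ref{Tp2} plus the divergence lemma, and then uses the two nonpositive terms and coercivity exactly as you do.

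The sign you left open comes out as you need. Since $\nabla x^\top=I-\langle x,\nu\rangle A$, one gets $\operatorname{div}\bigl(P_k(x^\top)\bigr)=(n-k)\sigma_k-(k+1)\sigma_{k+1}\langle x,\nu\rangle$, with a minus rather than the plus you first wrote. This does not depend on the orientation, because $A$ and $\langle x,\nu\rangle$ change sign together.

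Your bound $|P_k|\le\operatorname{tr}P_k=(n-k)\sigma_k$ is also valid, since $P_k$ is positive definite. It gives the control on $|P_k(x^\top)|$ that the cutoff argument needs.
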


The paper is organized as follows. Section \ref{pre} contains the preliminaries.
In Section \ref{proofTp1}, we prove Theorem \ref{Tp1}. Section \ref{s4}
is devoted to the proof of Theorem \ref{Tp2} and to its consequences.
More precisely, in Subsection \ref{sai}, we establish some auxiliary
computations that will be used in the proof, while in Subsection
\ref{proofTp2} we prove Theorem \ref{Tp2} and derive several applications. Finally, in Section~\ref{SimonsCone},
we discuss examples showing that boundedness of $|x^\perp|$ and
sublinear height growth are, in general, independent conditions.

\section{Preliminaries}\label{pre}

Let $x:\Sigma^n \to \mathbb{R}^{n+1}$ be a connected, oriented, properly immersed hypersurface.  
We denote by $\overline{\nabla}$ the Levi-Civita connection of $\mathbb{R}^{n+1}$ and by $\nabla$ the induced connection on $\Sigma$. We shall make no distinction in notation between a function
$f:\mathbb{R}^{n+1}\to\mathbb{R}$ and its restriction to $\Sigma$. Moreover, if $f$ is proper and $\Sigma$ is properly
immersed in $\mathbb{R}^{n+1}$, then the restriction of $f$ to $\Sigma$
is also proper.

The shape operator $A$ is given by
\[
A(X) = \overline{\nabla}_X \nu.
\]

The principal curvatures $\lambda_1,\ldots,\lambda_n$ are the eigenvalues of $A$.  
For $1\leq k\leq n$, the $k$-th elementary symmetric function of the principal curvatures is defined by
\begin{align}\label{sigmak}
\sigma_k
=
\sum_{i_1<\cdots<i_k}
\lambda_{i_1}\cdots\lambda_{i_k},
\end{align}
with the conventions $\sigma_0=1$ and $\sigma_j=0$ for $j>n$.

The $k$-th Newton transformation is defined recursively by
\[
P_0 = I, \qquad P_k = \sigma_k I - A P_{k-1}.
\]
For convenience, we also set $P_{-1}=0$.

If $Ae_i=\lambda_i e_i$, denote by $A_i$ the restriction of $A$ to the orthogonal complement
of $e_i$. Hence,
\[
\sigma_k(A_i)
=
\sum_{\substack{1\leq i_1<\cdots<i_k\leq n\\ i \notin\{i_1,\ldots,i_k\}}}
\lambda_{i_1}\cdots\lambda_{i_k}.
\]

The following lemma provides some useful relations between the elementary symmetric functions and the Newton transformations.
\begin{lemma}[\cite{MR1456513}, Lemma 2.1]\label{l1bc}
For $0\le k\le n-1$, we have:
\begin{enumerate}
\item[(a)] $\tr(P_k) = (n-k)\sigma_k$;
\item[(b)] $\tr(AP_k) = (k+1)\sigma_{k+1}$;
\item[(c)] $\tr(A^2 P_k) = \sigma_1\sigma_{k+1} - (k+2)\sigma_{k+2}$;
\item[(d)]
If $Ae_i=\lambda_i e_i$, then $P_k e_i=\sigma_k(A_i)e_i$.
\end{enumerate}
\end{lemma}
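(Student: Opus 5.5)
The plan is to work pointwise. At a fixed point of $\Sigma$, the shape operator $A$ is self-adjoint, so we choose an orthonormal frame $e_1,\dots,e_n$ with $Ae_i=\lambda_i e_i$. Everything then reduces to combinatorial identities for elementary symmetric functions. The basic tool is the splitting identity
\[
\sigma_k=\sigma_k(A_i)+\lambda_i\,\sigma_{k-1}(A_i),\qquad 1\le k\le n,
\]
which holds because each $k$-subset of $\{1,\dots,n\}$ either omits $i$ or contains it. We prove (d) first and derive (a), (b), (c) from it.

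For (d) we induct on $k$, proving the statement for all $0\le k\le n$. The case $k=0$ is $P_0=I$ with $\sigma_0(A_i)=1$. Assuming $P_{k-1}e_i=\sigma_{k-1}(A_i)e_i$, the recursion $P_k=\sigma_kI-AP_{k-1}$ gives
\[
P_ke_i=\bigl(\sigma_k-\lambda_i\sigma_{k-1}(A_i)\bigr)e_i=\sigma_k(A_i)e_i
\]
by the splitting identity. In particular $P_ne_i=\sigma_n(A_i)e_i=0$, since $A_i$ acts on an $(n-1)$-dimensional space; this is the Cayley--Hamilton fact $P_n=0$.

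The trace identities then follow from counting and from the recursion:
\begin{enumerate}
\item[(a)] We have $\tr P_k=\sum_i\sigma_k(A_i)$. Each product $\lambda_{i_1}\cdots\lambda_{i_k}$ appears once for every index $i$ it omits, and there are exactly $n-k$ such indices. Hence $\tr P_k=(n-k)\sigma_k$.
\item[(b)] Using (d) and the splitting identity at level $k+1$, we get $\tr(AP_k)=\sum_i\lambda_i\sigma_k(A_i)=\sum_i\bigl(\sigma_{k+1}-\sigma_{k+1}(A_i)\bigr)$. By (a) applied with $k+1$, which is the same count and remains valid at $k+1=n$ where both sides vanish, this equals $n\sigma_{k+1}-(n-k-1)\sigma_{k+1}=(k+1)\sigma_{k+1}$.
\item[(c)] From the recursion, $AP_k=\sigma_{k+1}I-P_{k+1}$, so $\tr(A^2P_k)=\sigma_1\sigma_{k+1}-\tr(AP_{k+1})$. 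Applying (b) with $k+1$ yields $\sigma_1\sigma_{k+1}-(k+2)\sigma_{k+2}$.
\end{enumerate}

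The only delicate point is the endpoint $k=n-1$ in (c). There (b) must be applied with $k+1=n$, outside its stated range. We handle it directly: $\tr(AP_n)=0$ because $P_n=0$, which matches $(n+1)\sigma_{n+1}=0$ under the convention $\sigma_j=0$ for $j>n$. Apart from this index bookkeeping, every step is a finite algebraic identity at a point. No smoothness issues arise, because the identities are frame-independent statements about the endomorphisms.
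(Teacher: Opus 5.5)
Your argument is correct: the induction for (d) via $\sigma_k=\sigma_k(A_i)+\lambda_i\sigma_{k-1}(A_i)$, the counting for (a), the splitting-identity computation for (b), and the reduction of (c) to $\tr(AP_{k+1})$ all go through, including the endpoint $k=n-1$, which you handle with $P_n=0$ and $\sigma_{n+1}=0$. The paper gives no proof of its own and simply cites Barbosa--Colares, where the lemma is established by essentially this same pointwise eigenframe computation.
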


A symmetric endomorphism $P:T\Sigma \to T\Sigma$ is said to be coercive if there exists a constant $\delta>0$ such that
\[
\langle P(X),X\rangle \geq \delta |X|^2
\]
for every $X\in T\Sigma$.

Equivalently, $P$ is uniformly positive definite, that is, all eigenvalues of $P$ are bounded from below by a positive constant.

\begin{remark}\label{cisk}
For $0\leq k\leq n-1$, the coercivity of $P_k$ implies that there exists a constant $\delta_0>0$ such that $\sigma_k\geq\delta_0$. Indeed, by item~(a) of Lemma \ref{l1bc},
\[
\sigma_k=\frac{\operatorname{tr}(P_k)}{n-k}
\geq \frac{n\delta}{n-k}\geq \delta >0,
\]
where $\delta>0$ is a lower bound for the eigenvalues of $P_k$.
\end{remark}

Note that
\begin{align*}
\nonumber \diver(P_k(W)) &= \sum_{i=1}^n \langle \nabla_{e_i}(P_k(W)), e_i \rangle \\
\nonumber &= \sum_{i=1}^n \langle \left(\nabla_{e_i} P_k \right)(W),e_i \rangle + \sum_{i=1}^n \langle P_k(\nabla_{e_i}W), e_i \rangle \\
\nonumber &= \langle \diver P_k, W\rangle + \tr\left( Y \to P_k(\nabla_Y W)\right)\\
&= \langle \diver P_k, W \rangle + \tr(P_k \circ \nabla W). 
\end{align*}
 It is well known that the Newton transformation $P_k$ is divergence-free for hypersurfaces immersed in $\mathbb{R}^{n+1}$; see, for instance, \cite{MR341351, MR1216008}. Hence,
\begin{align}\label{eqdivtr}
\operatorname{div}\!\left(P_k(\nabla u)\right)
=
\operatorname{tr}\!\left(P_k\circ\nabla^2u\right),
\end{align}
for every $u\in C^2(\Sigma)$.

Define the operator
\[
L_k u = \operatorname{div}(P_k(\nabla u)).
\]
 By \eqref{eqdivtr},
\[
L_k u=\operatorname{tr}\!\left(P_k\circ\nabla^2u\right).
\]
 
Since $\mathbb{R}^{n+1}$ has zero sectional curvature, we have the following special case of Lemma 5.2 in \cite{MR1456513}:

\begin{lemma}[\cite{MR1456513}, Lemma 5.2]\label{lema52}
For any immersion $x: \Sigma^n \to \mathbb{R}^{n+1}$ and for any fixed vector $a \in \mathbb{R}^{n+1}$, we have
\[
L_k \phi_a = -(k+1)\sigma_{k+1}\varphi_a.
\]
Here $\varphi_a = \langle a, \nu \rangle$ and $\phi_a = \langle a, x \rangle$.
\end{lemma}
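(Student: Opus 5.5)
The plan is a direct computation: find the Hessian of $\phi_a$ on $\Sigma$, then use the trace formula \eqref{eqdivtr} together with Lemma~\ref{l1bc}(b). Since $P_k$ is divergence-free in $\mathbb{R}^{n+1}$, \eqref{eqdivtr} gives $L_k\phi_a=\operatorname{tr}(P_k\circ\nabla^2\phi_a)$. So it is enough to show that
\[
\nabla^2\phi_a=-\varphi_a\,A,
\]
viewed as an endomorphism of $T\Sigma$.

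First, for any tangent $X$ we have $X(\phi_a)=\langle a,X\rangle$, so $\nabla\phi_a=a^\top$. Next, decompose the constant vector as
\[
a=a^\top+\varphi_a\,\nu .
\]
Differentiate this along $X\in T\Sigma$ with the flat connection $\overline{\nabla}$. Since $a$ is constant,
\[
0=\overline{\nabla}_X a^\top+X(\varphi_a)\,\nu+\varphi_a\,\overline{\nabla}_X\nu
=\overline{\nabla}_X a^\top+X(\varphi_a)\,\nu+\varphi_a\,A(X).
\]
Take tangential parts. The tangential part of $\overline{\nabla}_X a^\top$ is $\nabla_X a^\top$, and $A(X)$ is tangent. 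Hence
\[
\nabla_X a^\top=-\varphi_a A(X),
\]
which says exactly $\nabla^2\phi_a=-\varphi_aA$.

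Substituting into the trace formula gives
\[
L_k\phi_a=-\varphi_a\operatorname{tr}(P_kA).
\]
By the recursive definition, $P_k$ is a polynomial in $A$, so $P_kA=AP_k$. Lemma~\ref{l1bc}(b) then gives $\operatorname{tr}(AP_k)=(k+1)\sigma_{k+1}$, and therefore $L_k\phi_a=-(k+1)\sigma_{k+1}\varphi_a$.

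The boundary case $k=n$ lies outside the range of Lemma~\ref{l1bc}. There $P_n=0$ by Cayley--Hamilton and $\sigma_{n+1}=0$, so both sides vanish.

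The only delicate point is sign bookkeeping. The convention $A(X)=\overline{\nabla}_X\nu$ (with no minus sign) is what produces the minus sign in $\nabla^2\phi_a=-\varphi_aA$. With the opposite convention the identity would change sign, so this step needs care.
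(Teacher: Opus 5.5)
Your proof is correct and is essentially the argument the paper relies on. The paper cites Barbosa--Colares for this lemma, but in the proof of Theorem~\ref{hiperplan} it carries out exactly your computation: it combines $\nabla p^\top=-\langle p,\nu\rangle A$ with $\operatorname{div}P_k=0$, \eqref{eqdivtr} and Lemma~\ref{l1bc}(b) to obtain $\operatorname{div}(P_k(p^\top))=-(k+1)\sigma_{k+1}\langle p,\nu\rangle$; your Cayley--Hamilton treatment of the case $k=n$ is a correct extra detail the paper leaves implicit.
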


We also define the weighted operator
\[
L_{k,f}u = e^{f}\operatorname{div}(e^{-f}P_k(\nabla u))
= L_k u - \langle \nabla f, P_k(\nabla u)\rangle.
\]

Note that $L_0$ is the Laplacian, while $L_{0,f}$ denotes the drifted Laplacian. 

\subsection{The \texorpdfstring{$Q_k$}{Qk}-flow}

For the sake of completeness, we briefly review the $Q_k$-flow in this subsection.

Let $x : \Sigma^n \to \mathbb{R}^{n+1}$ be a properly immersed hypersurface.  
Consider a family of immersions $F : \Sigma \times [0,T) \to \mathbb{R}^{n+1}$ satisfying
\begin{align}\label{eq:flow}
\begin{cases}
\dfrac{\partial F(p,t)}{\partial t}
= -(k+1)Q_k(\lambda_1(p,t),\ldots,\lambda_n(p,t))\,\nu(p,t),
& (p,t)\in \Sigma\times[0,T),\\[1ex]
F(p,0) = x(p), & p\in \Sigma,
\end{cases}
\end{align}
where $\lambda_1,\ldots,\lambda_n$ are the principal curvatures of $x_t(\Sigma)$, $\nu$ is a chosen unit normal vector field and
\[
Q_k = \dfrac{\sigma_{k+1}}{\sigma_k}.
\]
The flow defined by \eqref{eq:flow} will be referred to as the
$Q_k$-flow. Since the factor $k+1$ is constant, it differs from the
usual $Q_k$-flow only by a reparametrization of time.

\begin{remark}
Our indexing is shifted by one with respect to the standard notation in
the literature on the $Q_k$-flow: our quotient
\[
Q_k=\frac{\sigma_{k+1}}{\sigma_k}
\]
corresponds to $Q_{k+1}$ in that notation. Moreover, we use the
time-rescaled flow
\[
\frac{\partial F}{\partial t}=-(k+1)Q_k\nu.
\]
The factor $k+1$ amounts only to a constant reparametrization of time
and is chosen so that the self-shrinker equation takes the form
\[
Q_k=\frac{1}{2(k+1)}\langle x,\nu\rangle.
\]
\end{remark}

A hypersurface $x:\Sigma^n\to\mathbb{R}^{n+1}$, with $\sigma_k\neq 0$,
is called a \emph{self-shrinker of the $Q_k$-flow} if it satisfies
\[
Q_k
=
\frac{\sigma_{k+1}}{\sigma_k}
=
\frac{1}{2(k+1)}\langle x,\nu\rangle.
\]
Throughout this paper, we adopt this normalization of the self-shrinker
equation. In particular, when $k=0$, since $\sigma_0=1$ and
$\sigma_1=H$, the equation reduces to
\[
H=\frac12\langle x,\nu\rangle,
\]
which is the classical self-shrinker equation for the mean curvature flow.

Andrews~\cite{MR1385524, MR2339467} proved the existence of such flows under convexity assumptions and showed that the hypersurfaces converge in finite time to a round sphere. 
In particular, he established the existence of solutions to the $Q_k$-flow for hypersurfaces with positive principal curvatures.

Dieter \cite{MR2106768} studied the convergence of the $Q_k$-flow and proved short-time existence for weakly convex hypersurfaces satisfying $\sigma_k>0$. 
Note that, when $k=0$, we recover the mean curvature flow. Another case
of particular interest is the harmonic mean curvature flow, which,
up to the constant time rescaling adopted in \eqref{eq:flow},
corresponds to $k=n-1$.
Daskalopoulos and Sesum~\cite{MR2564404} studied the harmonic mean
curvature flow of compact star-shaped mean-convex surfaces in
$\mathbb{R}^3$, obtaining finite-time extinction and, under additional
assumptions, asymptotic sphericity results, in particular in the
rotationally symmetric case.

Choi and Daskalopoulos~\cite{MR4380033} established long-time existence
results for complete noncompact weakly convex hypersurfaces evolving by
the $Q_k$-flow and characterized the maximal existence time in terms of
the dimension of the space of unbounded directions of the initial
hypersurface.

Caputo, Daskalopoulos and Sesum established in \cite{MR2748631} the existence and uniqueness of a solution of the $Q_k$-flow in the viscosity sense for compact convex hypersurfaces. Moreover, they showed that, for compact convex hypersurfaces with flat sides, if the initial hypersurface satisfies a suitable nondegeneracy condition, then the interface separating the flat region from the strictly convex region becomes smooth and evolves by the $Q_{k-1}$-flow, at least for a short time.

We conclude this section by identifying the self-shrinking generalized cylinders for the $Q_k$-flow.
\begin{lemma}
Let $1\leq m\leq n$, $a\in\mathbb R^{m+1}$,
$0\leq k\leq m-1$, and $\rho>0$. The hypersurface
$\mathbb{S}^m(a,\rho)\times\mathbb{R}^{n-m}$ immersed in $\mathbb{R}^{n+1}$
is a self-shrinker of the $Q_k$-flow if and only if it coincides with the product
$\mathbb{S}^m(0,\sqrt{2(m-k)})\times\mathbb{R}^{n-m}$.
\end{lemma}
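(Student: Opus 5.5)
The plan is to compute the principal curvatures of the cylinder directly and substitute them into the self-shrinker equation. Write a point of $\Sigma=\mathbb{S}^m(a,\rho)\times\mathbb{R}^{n-m}$ as $x=(y,z)$ with $y\in\mathbb{R}^{m+1}$, $|y-a|=\rho$, $z\in\mathbb{R}^{n-m}$. Take the outward unit normal $\nu=\frac{1}{\rho}(y-a,0)$. Then $A(X)=\overline{\nabla}_X\nu=\frac1\rho X$ for $X$ tangent to the sphere factor, and $A(X)=0$ for $X$ tangent to the $\mathbb{R}^{n-m}$ factor. So $m$ principal curvatures equal $\frac1\rho$ and the remaining $n-m$ vanish. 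It follows that $\sigma_j=\binom{m}{j}\rho^{-j}$ for $0\le j\le m$. Since $k\le m-1$, we have $\sigma_k>0$, so the quotient is defined, and
\[
Q_k=\frac{\sigma_{k+1}}{\sigma_k}=\frac{\binom{m}{k+1}}{\binom{m}{k}}\,\frac1\rho=\frac{m-k}{(k+1)\rho}.
\]
With the opposite normal both sides of the equation change sign together (since $\sigma_{k+1}/\sigma_k$ is odd under $A\mapsto -A$), so the choice of orientation is harmless.

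Next I compute the support function: $\langle x,\nu\rangle=\frac1\rho\langle y,y-a\rangle=\rho+\frac1\rho\langle a,y-a\rangle$. The self-shrinker equation $Q_k=\frac{1}{2(k+1)}\langle x,\nu\rangle$ then becomes
\[
\frac{2(m-k)}{\rho}=\rho+\frac{1}{\rho}\langle a,y-a\rangle \quad\text{for all } y\in\mathbb{S}^m(a,\rho).
\]
The left side is constant, while $y\mapsto\langle a,y-a\rangle$ is a linear function of $y-a$ as it ranges over the whole sphere of radius $\rho$ centered at $0$. That function is constant only if $a=0$; otherwise it takes the values $\pm\rho|a|$. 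So $a=0$, and then $\rho^2=2(m-k)$, which gives $\rho=\sqrt{2(m-k)}$ because $m-k\ge1$.

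For the converse, the same computation with $a=0$ and $\rho=\sqrt{2(m-k)}$ gives $Q_k=\frac{m-k}{(k+1)\rho}=\frac{\rho}{2(k+1)}=\frac{1}{2(k+1)}\langle x,\nu\rangle$, so the equation holds.

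The only mildly delicate step is the binomial ratio $\binom{m}{k+1}/\binom{m}{k}=\frac{m-k}{k+1}$, together with the observation that $k\le m-1$ ensures $\sigma_k\neq0$. The rest is routine.
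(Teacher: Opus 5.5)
Your proposal is correct and follows essentially the same route as the paper. You compute the principal curvatures to get $Q_k=\frac{m-k}{(k+1)\rho}$, then reduce the self-shrinker equation to the constancy of the linear function $\langle a,y-a\rangle$ on the sphere, which forces $a=0$ and $\rho^2=2(m-k)$; your observation that reversing the normal flips the sign of both sides is a small addition the paper leaves implicit.
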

\begin{proof}
Let $\{E_1,\ldots,E_{m+1}\}$ be the canonical basis of
$\mathbb R^{m+1}$, and let
$(x_1,\ldots,x_{m+1})$ denote the corresponding Cartesian coordinates.
The outward unit normal vector field along
$\mathbb S^m(a,\rho)\times\mathbb R^{n-m}$ is
\[
\nu=\frac{1}{\rho}\sum_{i=1}^{m+1}(x_i-a_i)E_i.
\]
The principal curvatures are
\[
\lambda_1=\cdots=\lambda_m=\frac{1}{\rho},
\qquad
\lambda_{m+1}=\cdots=\lambda_n=0.
\]
Hence,
\[
\sigma_k=
\begin{cases}
\displaystyle \binom{m}{k}\rho^{-k}, & k\le m,\\[1ex]
0, & k>m,
\end{cases}
\]
and, for $k\le m-1$,
\[
\frac{\sigma_{k+1}}{\sigma_k}
=
\frac{\binom{m}{k+1}}{\binom{m}{k}}
\frac{\rho^{-k-1}}{\rho^{-k}}
=
\frac{m-k}{k+1}\frac1\rho.
\]
By the self-shrinker equation,
\[
\frac{\sigma_{k+1}}{\sigma_k}
=
\frac{1}{2(k+1)}\langle x,\nu\rangle.
\]
Therefore,
\[
\frac{m-k}{k+1}\frac{1}{\rho}
=
\frac{1}{2(k+1)}
\frac{1}{\rho}
\sum_{i=1}^{m+1}x_i(x_i-a_i),
\]
which simplifies to
\[
\sum_{i=1}^{m+1}x_i(x_i-a_i)=2(m-k).
\]
On the other hand,
\[
\sum_{i=1}^{m+1}(x_i-a_i)^2=\rho^2.
\]
Subtracting these two identities yields
\[
\sum_{i=1}^{m+1}a_i(x_i-a_i)
=
2(m-k)-\rho^2.
\]
The right-hand side is constant, whereas
\[
\sum_{i=1}^{m+1}a_i(x_i-a_i)
=
\langle a,x-a\rangle
\]
is a linear function on the sphere
$\mathbb S^m(a,\rho)$.
Since $x-a$ ranges over all vectors of length $\rho$, this function is
constant if and only if $a=0$.

Consequently, the cylinder is centered at the origin. Then
\[
\sum_{i=1}^{m+1}x_i^2=\rho^2,
\]
and the identity
\[
\sum_{i=1}^{m+1}x_i(x_i-a_i)=2(m-k)
\]
reduces to
\[
\rho^2=2(m-k).
\]
Hence,
\[
\rho=\sqrt{2(m-k)},
\]
and therefore
\[
\mathbb S^m(a,\rho)\times\mathbb R^{n-m}
=
\mathbb S^m(0,\sqrt{2(m-k)})\times\mathbb R^{n-m}.
\]

The converse is a direct computation.
\end{proof}

\section{Proof of Theorem \ref{Tp1}}\label{proofTp1}

In this section, we prove a general volume estimate result based on
\cite{MR2996973} and \cite{MR4300234}.

\begin{theorem}[Theorem \ref{Tp1}]
Let $M$ be a complete Riemannian manifold, $P: TM \to TM$ a smooth symmetric bundle endomorphism, and let $f \in C^2(M)$ be a proper nonnegative function.  
Assume there exist constants $\alpha, \beta > 0$ and $a\ge 0$ such that
\begin{align*}
\diver\bigl(P(\nabla f)\bigr) - \alpha \langle P(\nabla f), \nabla f\rangle + \beta f &\leq a,\\
\diver\bigl(P(\nabla f)\bigr) &\leq a,\\
\langle P(\nabla f), \nabla f\rangle &\geq 0.
\end{align*}
Then
\begin{enumerate}
\item[(i)]
\[
\int_M e^{-\alpha f}\, dv_g < +\infty.
\]

\item[(ii)]
For $r \geq 1$, the volume of $D_r$ has polynomial growth:
\[
\vol(D_r) \leq C r^{\frac{2\alpha a}{\beta}},
\]
where
\(
D_r = \left\{ x \in M: 2\sqrt{f}\leq \sqrt{2\beta}r \right\}
\)
and
\(
C = e^{\frac{\alpha \beta}{2}} \int_M e^{-\alpha f}\, dv_g.
\)
\end{enumerate}
\end{theorem}

\begin{proof}
Define
\[
L_{\phi_t} u := e^{\phi_t}\operatorname{div}(e^{-\phi_t} P(\nabla u)),
\]
where
\[
\phi_t = a\log t + \frac{\alpha}{t^\gamma} f
\quad \text{and} \quad
\gamma = \frac{\beta}{\alpha}.
\]

Note that, for $t \geq 1$,
\begin{align*}
(L_{\phi_t} f)e^{-\phi_t} + t\,\frac{d}{dt} e^{-\phi_t}
&= e^{-\phi_t}
\big( \operatorname{div}(P(\nabla f))
      - \langle P(\nabla f), \nabla \phi_t \rangle \big)
   + t\,\frac{d}{dt} e^{-\phi_t} \\
&= \big( \operatorname{div}(P(\nabla f))
        - \frac{\alpha}{t^\gamma}\langle P(\nabla f), \nabla f \rangle
        - a + \frac{\beta}{t^\gamma} f \big)e^{-\phi_t} \\
&= \Bigg[
\frac{1}{t^\gamma}
\big( \operatorname{div}(P(\nabla f))
      - \alpha \langle P(\nabla f), \nabla f \rangle
      + \beta f - a \big) \\
&\hspace{1cm}
+ \left(1 - \frac{1}{t^\gamma}\right)
(\operatorname{div}(P(\nabla f)) - a)
\Bigg] e^{-\phi_t} \\
&\leq 0.
\end{align*}
Hence, at a regular value of $f$ and for $t \ge 1$, we obtain
\[
\begin{aligned}
\frac{d}{dt} \int_{D_r} e^{-\phi_t}
&= \int_{D_r} \frac{d}{dt} e^{-\phi_t} \\
&\le -\frac{1}{t} \int_{D_r} (L_{\phi_t} f)e^{-\phi_t}
= -\frac{1}{t} \int_{D_r}
\operatorname{div}(e^{-\phi_t} P(\nabla f)) \\
&= -\frac{1}{t} \int_{\partial D_r}
\frac{1}{|\nabla f|}
\langle P(\nabla f), \nabla f \rangle e^{-\phi_t} \\
&\le 0,
\end{aligned}
\]
since $\langle P(\nabla f), \nabla f \rangle \ge 0$.

Integrating from $1$ to $T$, we obtain
\[
\int_{D_r} e^{-\phi_T}
\le \int_{D_r} e^{-\phi_1}
= \int_{D_r} e^{-\alpha f},
\]
because $\phi_1 = \alpha f$.

Taking $T = r^{\frac{2}{\gamma}}$, we obtain
\begin{align}\label{d1p0}
r^{-\frac{2a}{\gamma}}
\int_{D_r} e^{-\frac{\alpha}{r^2} f}
\le \int_{D_r} e^{-\alpha f}.
\end{align}
Since the integrals above are right–continuous in $r$, the inequality holds for all $r \ge 1$.
Moreover, since $f \le \frac{\beta}{2} r^2$ on $D_r$, it follows that
\begin{align}\label{d2p0}
\int_{D_r} e^{-\frac{\alpha}{r^2} f}
\ge e^{-\frac{\alpha\beta}{2}} \operatorname{Vol}(D_r).
\end{align}
Combining \eqref{d1p0} and \eqref{d2p0}, we obtain
\begin{align}\label{eq1p0r}
\operatorname{Vol}(D_r)
\le e^{\frac{\alpha\beta}{2}} r^{\frac{2\alpha a}{\beta}}
\int_{D_r} e^{-\alpha f},
\qquad \text{for all } r \ge 1.
\end{align}
Since on $D_r \setminus D_{r-1}$ one has
\[
\frac{\beta}{2}(r-1)^2 \le f \le \frac{\beta}{2} r^2,
\]
we observe that
\begin{align}\label{eq35p0}
\int_{D_r} e^{-\alpha f} \, dv_g
- \int_{D_{r-1}} e^{-\alpha f} \, dv_g
&= \int_{D_r \setminus D_{r-1}} e^{-\alpha f} \, dv_g \\
\nonumber &\leq e^{-\frac{\alpha\beta}{2}(r-1)^2}
\operatorname{Vol}(D_r \setminus D_{r-1}) \\
\nonumber &\leq e^{-\frac{\alpha\beta}{2}(r-1)^2}
\operatorname{Vol}(D_r) \\
\nonumber &\le e^{-\frac{\alpha\beta}{2}(r-1)^2}
e^{\frac{\alpha\beta}{2}} r^{\frac{2\alpha a}{\beta}}
\int_{D_r} e^{-\alpha f} \, dv_g,
\end{align}
where in the last inequality we used \eqref{eq1p0r}.

Since
\[
e^{-\frac{\alpha\beta}{2}(r-1)^2}
e^{\frac{\alpha\beta}{2}} r^{\frac{2\alpha a}{\beta}} e^r
\longrightarrow 0
\quad \text{as } r \to +\infty,
\]
for $r$ sufficiently large we have
\[
e^{-\frac{\alpha\beta}{2}(r-1)^2}
e^{\frac{\alpha\beta}{2}} r^{\frac{2\alpha a}{\beta}}
\le e^{-r}.
\]
Therefore, by \eqref{eq35p0},
\[
\int_{D_r} e^{-\alpha f} \, dv_g
\le \frac{1}{1 - e^{-r}}
\int_{D_{r-1}} e^{-\alpha f} \, dv_g.
\]

Hence, for any $N \in \mathbb{N}$ and $r$ sufficiently large, we obtain
\[
\int_{D_{r+N}} e^{-\alpha f} \, dv_g
\le \left( \prod_{i=0}^{N}
\frac{1}{1 - e^{-(r+i)}} \right)
\int_{D_{r-1}} e^{-\alpha f} \, dv_g.
\]
Observing that
\[
\prod_{i=0}^{\infty} \frac{1}{1 - e^{-(r+i)}}
\]
converges to a positive real number, and letting $N \to +\infty$, we conclude that
\begin{align}\label{eq37p0}
\int_M e^{-\alpha f} \, dv_g < +\infty.
\end{align}
This proves (i).

Finally, combining \eqref{eq1p0r} and \eqref{eq37p0}, we obtain
\[
\operatorname{Vol}(D_r)
\le C r^{\frac{2\alpha a}{\beta}},
\]
where
\[
C = e^{\frac{\alpha\beta}{2}} \int_M e^{-\alpha f} \, dv_g.
\]
\end{proof}

\section{A Volume Estimate}\label{s4}

In this section, we prove Theorem~\ref{Tp2}. We begin by establishing
some auxiliary identities for the operators $L_k$ and $L_{k,f}$ that
will be used in the proof.

\subsection{Auxiliary Identities}\label{sai}

The following identity is an immediate consequence of Lemma~\ref{lema52}.
\begin{proposition}
The operator $L_k$ applied to the position vector
$x=(x_1,\dots,x_{n+1})$ satisfies
\begin{align}\label{l1xi}
L_k x=-(k+1)\sigma_{k+1}\nu,
\end{align}
where $\nu$ is the chosen unit normal field. 
\end{proposition}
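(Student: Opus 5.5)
The plan is to apply Lemma~\ref{lema52} one coordinate at a time. We interpret $L_k x$ as the vector $(L_k x_1,\dots,L_k x_{n+1})$, obtained by applying $L_k$ to each coordinate function.

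First, we take $a=E_i$, the $i$-th vector of the canonical basis of $\mathbb{R}^{n+1}$. Then $\phi_{E_i}=\langle E_i,x\rangle=x_i$ and $\varphi_{E_i}=\langle E_i,\nu\rangle=\nu_i$. Lemma~\ref{lema52} gives, for each $i=1,\dots,n+1$,
\[
L_k x_i=-(k+1)\sigma_{k+1}\,\nu_i .
\]

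Next, we assemble these scalar identities. Since $L_k$ acts componentwise and $E_i$ is constant, we have
\[
L_k x=\sum_{i=1}^{n+1}(L_k x_i)E_i=-(k+1)\sigma_{k+1}\sum_{i=1}^{n+1}\nu_iE_i=-(k+1)\sigma_{k+1}\nu,
\]
which is \eqref{l1xi}.

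There is no real obstacle; the only point needing care is the convention for $L_k$ acting on vector-valued maps. One could alternatively check the identity directly from \eqref{eqdivtr}. Using $\nabla^2 x=\langle A\cdot,\cdot\rangle\nu$, with the sign fixed by $A(X)=\overline{\nabla}_X\nu$, together with Lemma~\ref{l1bc}(b), gives $\tr(P_k\circ\nabla^2x)=-\tr(AP_k)\,\nu=-(k+1)\sigma_{k+1}\nu$. This serves as a consistency check on the sign.
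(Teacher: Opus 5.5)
Your proposal is correct and is exactly the paper's argument: apply Lemma~\ref{lema52} with $a=E_i$ to get $L_k x_i=-(k+1)\sigma_{k+1}\nu_i$ for each $i$, then assemble the components into $L_k x=-(k+1)\sigma_{k+1}\nu$. One small slip in your optional consistency check: with $A(X)=\overline{\nabla}_X\nu$ one has $\nabla^2x(X,Y)=\langle\overline{\nabla}_XY,\nu\rangle\nu=-\langle AX,Y\rangle\nu$, so the minus sign belongs in the Hessian formula itself, and that is the form your trace $-\tr(AP_k)\,\nu$ actually uses.
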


\begin{proof}
Fix $E_i$ as the $i$-th canonical basis vector of $\mathbb{R}^{n+1}$.
Applying Lemma~\ref{lema52} with $a=E_i$ gives
\[
L_k x_i=-(k+1)\sigma_{k+1}\nu_i,
\]
where $\nu_i = \langle \nu, E_i \rangle$. Since this holds for every $i=1,\ldots,n+1$, the result follows.
\end{proof}

By a direct computation, we obtain
\begin{align}\label{l1v2}
L_k \varphi^2
= 2\varphi L_k \varphi
+ 2\langle \nabla \varphi, P_k(\nabla \varphi)\rangle.
\end{align}

Fix $p\in\Sigma$ and choose an orthonormal basis $\{e_1,\ldots,e_n\}$ of $T_p\Sigma$ diagonalizing $P_k$.
Since $\nabla x_i \in T\Sigma$ for each $1 \le i \le n+1$, we have
\[
\nabla x_i = \sum_{m=1}^n \langle e_m, \nabla x_i \rangle e_m.
\]
Since $P_k(e_l)=\kappa_l e_l$, where $\kappa_1,\dots,\kappa_n$ are the eigenvalues of
$P_k$, we obtain
\begin{align}\label{trPr}
\sum_{i=1}^{n+1} \langle \nabla x_i, P_k(\nabla x_i)\rangle
&= \sum_{i=1}^{n+1} \sum_{m,l=1}^n
\langle e_m, \nabla x_i\rangle
\langle e_l, \nabla x_i\rangle
\langle e_m, P_k(e_l)\rangle \nonumber\\
&= \sum_{i=1}^{n+1} \sum_{l=1}^n
\langle e_l, \nabla x_i\rangle^2 \kappa_l \nonumber\\
&= \sum_{l=1}^n \kappa_l
\sum_{i=1}^{n+1} \langle e_l, \overline{\nabla} x_i\rangle^2
= \sum_{l=1}^n \kappa_l
= \operatorname{tr}(P_k),
\end{align}
where the penultimate equality follows from the fact that
$\{\overline{\nabla} x_1,\dots,\overline{\nabla} x_{n+1}\}$ is the canonical basis
of $\mathbb{R}^{n+1}$ and the vectors $e_l$ are unit.

The following proposition provides a clean expression for the operator $L_k$
applied to $f=\frac{|x|^2}{4}$.

\begin{proposition}\label{l1f}
For $f=\frac{|x|^2}{4}$, we have
\[
L_k f
= - (k+1)\sigma_{k+1} \langle \overline{\nabla} f, \nu \rangle
+ \frac{n-k}{2}\sigma_k.
\]
In particular, if $\bm{H}_{k,f}=0$, then
\[
L_k f
= \frac{n-k-2\big|(\overline{\nabla} f)^\perp\big|^2}{2}\,\sigma_k.
\]
\end{proposition}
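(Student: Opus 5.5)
We will compute $L_k f$ with the product rule applied to the coordinate functions. Since $f=\frac14\sum_{i=1}^{n+1}x_i^2$, the identity \eqref{l1v2} with $\varphi=x_i$, summed over $i$, gives
\[
L_k f=\frac14\sum_{i=1}^{n+1}L_k x_i^2=\frac12\sum_{i=1}^{n+1}x_i\,L_k x_i+\frac12\sum_{i=1}^{n+1}\langle\nabla x_i,P_k(\nabla x_i)\rangle .
\]

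We treat the two sums separately. For the first sum we use \eqref{l1xi}, that is, $L_k x_i=-(k+1)\sigma_{k+1}\nu_i$. This yields
\[
\frac12\sum_i x_iL_kx_i=-\frac{k+1}{2}\sigma_{k+1}\langle x,\nu\rangle=-(k+1)\sigma_{k+1}\langle\overline{\nabla}f,\nu\rangle,
\]
because $\overline{\nabla}f=\frac{x}{2}$. For the second sum, \eqref{trPr} gives $\operatorname{tr}(P_k)$, and Lemma~\ref{l1bc}(a) gives $\operatorname{tr}(P_k)=(n-k)\sigma_k$. Adding the two contributions produces the first formula.

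For the particular case we assume $\bm{H}_{k,f}=0$. Taking the inner product of $\bm{H}_{k,f}$ with $\nu$ gives $\sigma_k\langle\overline{\nabla}f,\nu\rangle=(k+1)\sigma_{k+1}$. Substituting this into the first formula gives
\[
L_kf=-\sigma_k\langle\overline{\nabla}f,\nu\rangle^2+\frac{n-k}{2}\sigma_k .
\]
Finally, $\langle\overline{\nabla}f,\nu\rangle^2=|(\overline{\nabla}f)^\perp|^2$, since the normal space is spanned by $\nu$. Rearranging gives the stated expression.

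No step is a real obstacle. The only points needing care are the factor $\frac12$ coming from $\overline{\nabla}f=\frac{x}{2}$ and the fact that \eqref{l1v2} holds for $P_k$ by symmetry. The restriction $k\le n-1$ required by Lemma~\ref{l1bc} is harmless: for $k=n$ we have $P_n=0$, so $\operatorname{tr}(P_n)=0=(n-n)\sigma_n$.
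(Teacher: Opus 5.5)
Your proposal is correct and follows the paper's proof essentially step by step. You expand $L_k f$ over the coordinate functions via \eqref{l1v2}, evaluate the first sum with \eqref{l1xi} and the second with \eqref{trPr} and Lemma~\ref{l1bc}(a), and then substitute $(k+1)\sigma_{k+1}=\sigma_k\langle\overline{\nabla}f,\nu\rangle$, obtained from $\bm{H}_{k,f}=0$. One minor aside: \eqref{l1v2} is just the product rule $\operatorname{div}(\psi X)=\psi\operatorname{div}X+\langle\nabla\psi,X\rangle$, so it does not actually need the symmetry of $P_k$.
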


\begin{proof}
By \eqref{l1v2}, we have
\[
\begin{aligned}
L_k f
&= \frac{1}{4} L_k |x|^2
= \frac{1}{4} \sum_{i=1}^{n+1} L_k x_i^2 \\
&= \frac{1}{2} \sum_{i=1}^{n+1}
\bigl(x_i L_k x_i + \langle \nabla x_i, P_k(\nabla x_i)\rangle\bigr) \\
&= -(k+1)\sigma_{k+1}\langle \overline{\nabla} f,\nu \rangle
+ \frac{1}{2}\sum_{i=1}^{n+1}
\langle \nabla x_i, P_k(\nabla x_i)\rangle.
\end{aligned}
\]
By \eqref{trPr} and Lemma~\ref{l1bc}, we have
\[
\sum_{i=1}^{n+1} \langle \nabla x_i, P_k(\nabla x_i)\rangle
= \operatorname{tr}(P_k)
= (n-k)\sigma_k,
\]
which proves the first statement. If $\bm{H}_{k,f}=0$, then
\[
L_k f
= -\sigma_k\big|(\overline{\nabla} f)^\perp\big|^2
+ \frac{n-k}{2}\sigma_k
= \frac{n-k-2\big|(\overline{\nabla} f)^\perp\big|^2}{2}\sigma_k.
\]
\end{proof}
As a consequence of Proposition~\ref{l1f}, we obtain the following rigidity result.
\begin{cor}\label{cs}
Let $\Sigma^n$ be a compact hypersurface without boundary immersed in
$\mathbb{R}^{n+1}$. Assume that $\Sigma$ is contained in the ball
$\overline{\bm{B}(\sqrt{2(n-k)})}$, that $\sigma_k>0$, and that
$\bm{H}_{k,f}=0$ on $\Sigma$, where $f=\frac{|x|^2}{4}$. Then
$\Sigma$ is the sphere $\mathbb{S}^n(\sqrt{2(n-k)})$.
\end{cor}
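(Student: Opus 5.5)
The plan is to integrate the identity of Proposition~\ref{l1f} over the closed hypersurface $\Sigma$ and combine it with the pointwise bound coming from the ball hypothesis. Since $\bm{H}_{k,f}=0$, Proposition~\ref{l1f} gives
\[
L_k f=\frac{n-k-2\big|(\overline{\nabla} f)^\perp\big|^2}{2}\,\sigma_k
=\frac{n-k-\tfrac12|x^\perp|^2}{2}\,\sigma_k ,
\]
because $\overline{\nabla} f=\tfrac{x}{2}$ and hence $|(\overline{\nabla} f)^\perp|^2=\tfrac14|x^\perp|^2$. Since $\Sigma$ is compact without boundary and $L_k f=\operatorname{div}(P_k(\nabla f))$, the divergence theorem yields $\int_\Sigma L_k f\,dv_g=0$. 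Thus
\[
\int_\Sigma \Bigl(2(n-k)-|x^\perp|^2\Bigr)\sigma_k\,dv_g=0 .
\]

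Next I use the containment $\Sigma\subset\overline{\bm{B}(\sqrt{2(n-k)})}$. It gives $|x^\perp|^2\le|x|^2\le 2(n-k)$, so the integrand above is nonnegative, using $\sigma_k>0$. A nonnegative continuous function with zero integral vanishes identically. Because $\sigma_k>0$, this forces $|x^\perp|^2=2(n-k)$ on all of $\Sigma$. Feeding this back into the chain of inequalities, both must be equalities: $|x|^2=2(n-k)$ and $x^\top=0$ everywhere.

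From $|x|^2\equiv 2(n-k)$, the immersion lands in the sphere $\mathbb{S}^n(\sqrt{2(n-k)})$. It remains to see that $\Sigma$ is the whole sphere. An immersion of a closed $n$-manifold into a connected $n$-manifold is a local diffeomorphism with compact, hence closed, image that is also open. So it is a covering onto $\mathbb{S}^n(\sqrt{2(n-k)})$, and it is a diffeomorphism when $n\ge2$ since the sphere is simply connected. For $n=1$ one gets a multiply covered circle, which I would regard as the same set, or else one assumes $\Sigma$ is embedded or connected.

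As a consistency check, the sphere of radius $\sqrt{2(n-k)}$ does satisfy $\bm{H}_{k,f}=0$, by the generalized-cylinder lemma with $m=n$. The main point needing care is the sign bookkeeping: the equality $|(\overline{\nabla} f)^\perp|^2=\tfrac14|x^\perp|^2$ must be used, together with the fact that the ball condition bounds $|x^\perp|^2$ by exactly $2(n-k)$. Everything else is routine.
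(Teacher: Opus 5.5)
Your proof is correct and follows the paper's argument: integrate $L_k f=\operatorname{div}(P_k(\nabla f))$ using Proposition~\ref{l1f}, then use the ball bound $|x^\perp|^2\le|x|^2\le 2(n-k)$ together with $\sigma_k>0$ to force $|x|^2\equiv 2(n-k)$, and finish with an open--closed argument in the sphere. Your covering-space treatment of the last step, including the caveat about $n=1$ or a disconnected $\Sigma$, is a bit more careful than the paper's, which only identifies the image with the sphere.
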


\begin{proof}
By the previous proposition and the definition of $L_k$, we have
\begin{align*}
\operatorname{div}(P_k(\nabla f))
&= L_k f
= \frac{n-k-2\big|(\overline{\nabla} f)^\perp\big|^2}{2}\sigma_k \\
&\ge \frac{n-k-2|\overline{\nabla} f|^2}{2}\sigma_k
= \frac{2(n-k)-|x|^2}{4}\sigma_k.
\end{align*}
By the Divergence Theorem, it follows that
\[
0 = \int_\Sigma \operatorname{div}(P_k(\nabla f))\, dv_g \geq \frac{1}{4}\int_\Sigma (2(n-k)-|x|^2)\sigma_k\,dv_g \ge 0.
\]
Hence,
\[
|x|^2 = 2(n-k)\quad \text{on } \Sigma.
\]
Since $\Sigma$ is an immersed hypersurface contained in
$\mathbb{S}^n\bigl(\sqrt{2(n-k)}\bigr)$ and has the same dimension as the
sphere, $\Sigma$ is open in
$\mathbb{S}^n\bigl(\sqrt{2(n-k)}\bigr)$. On the other hand, the compactness
of $\Sigma$ implies that it is closed in
$\mathbb{S}^n\bigl(\sqrt{2(n-k)}\bigr)$. Since $\mathbb{S}^n\bigl(\sqrt{2(n-k)}\bigr)$ is connected, we conclude that $\Sigma = \mathbb{S}^n\bigl(\sqrt{2(n-k)}\bigr)$.

\end{proof}

Before proving Theorem~\ref{Tp2}, we derive a simplified expression for
$L_{k,f}f$.

\begin{proposition}\label{Lrff}
For $f=\frac{|x|^2}{4}$, we have
\[
L_{k,f} f
= -\sigma_k f
+ \langle \overline{\nabla} f, \bm{H}_{k,f} \rangle
+ \langle \nabla f, AP_{k-1}(\nabla f)\rangle
+ \frac{n-k}{2}\sigma_k.
\]
\end{proposition}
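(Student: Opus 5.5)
The plan is to expand $L_{k,f}f$ with the definition of the weighted operator and then rewrite each term using Proposition~\ref{l1f}, the recursion for $P_k$, and the normalization $f=\frac{|x|^2}{4}$. By definition,
\[
L_{k,f}f = L_k f - \langle \nabla f, P_k(\nabla f)\rangle .
\]
For the first term I would invoke Proposition~\ref{l1f}:
\[
L_k f = -(k+1)\sigma_{k+1}\langle \overline{\nabla} f,\nu\rangle + \frac{n-k}{2}\sigma_k .
\]

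For the second term I would use the recursion $P_k=\sigma_k I - AP_{k-1}$, with the convention $P_{-1}=0$ so that the case $k=0$ is included. This gives
\[
\langle \nabla f, P_k(\nabla f)\rangle = \sigma_k|\nabla f|^2 - \langle \nabla f, AP_{k-1}(\nabla f)\rangle .
\]
Since $\nabla f = (\overline{\nabla} f)^\top$ and $\overline{\nabla} f = \frac{x}{2}$, we have $|\overline{\nabla} f|^2 = \frac{|x|^2}{4} = f$. Hence
\[
|\nabla f|^2 = f - \big|(\overline{\nabla} f)^\perp\big|^2 .
\]
This is the only point where the specific choice of $f$ enters beyond Proposition~\ref{l1f}, and it is what produces the term $-\sigma_k f$.

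Next I would recognize the remaining normal terms as $\langle \overline{\nabla} f, \bm{H}_{k,f}\rangle$. Because $\bm{H}_{k,f}$ is normal and $(\overline{\nabla} f)^\perp = \langle \overline{\nabla} f,\nu\rangle \nu$,
\[
\langle \overline{\nabla} f, \bm{H}_{k,f}\rangle = \sigma_k\big|(\overline{\nabla} f)^\perp\big|^2 - (k+1)\sigma_{k+1}\langle \overline{\nabla} f,\nu\rangle .
\]
Collecting everything yields
\[
L_{k,f}f = -(k+1)\sigma_{k+1}\langle \overline{\nabla} f,\nu\rangle + \frac{n-k}{2}\sigma_k - \sigma_k f + \sigma_k\big|(\overline{\nabla} f)^\perp\big|^2 + \langle \nabla f, AP_{k-1}(\nabla f)\rangle .
\]
The first and fourth terms combine into $\langle \overline{\nabla} f, \bm{H}_{k,f}\rangle$, which gives the stated identity.

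There is no real obstacle; this is bookkeeping. The only points needing care are the sign conventions, namely $A(X)=\overline{\nabla}_X\nu$ and the minus sign in front of $(k+1)\sigma_{k+1}\nu$ in $\bm{H}_{k,f}$, and remembering to use $P_{-1}=0$ for $k=0$.
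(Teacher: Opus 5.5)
Your proposal is correct and follows essentially the same route as the paper: expand $L_{k,f}f=L_kf-\langle P_k(\nabla f),\nabla f\rangle$, use $P_k=\sigma_k I-AP_{k-1}$, substitute $|\nabla f|^2=f-\big|(\overline{\nabla} f)^\perp\big|^2$, and combine the normal terms into $\langle \overline{\nabla} f,\bm{H}_{k,f}\rangle$ via Proposition~\ref{l1f}. The bookkeeping and the sign conventions all check out.
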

\begin{proof}
By the definition of $L_{k,f}$,
\[
L_{k,f}f
=
L_kf-\langle P_k(\nabla f),\nabla f\rangle.
\]
Since $P_k=\sigma_k I-AP_{k-1}$, we have
\[
-\langle P_k(\nabla f),\nabla f\rangle
=
-\sigma_k|\nabla f|^2
+
\langle\nabla f,AP_{k-1}(\nabla f)\rangle.
\]
Moreover, since $f=|\overline{\nabla}f|^2 = |\nabla f|^2 + \bigl|(\overline{\nabla}f)^\perp\bigr|^2$ and
\[
\langle\overline{\nabla}f,\bm{H}_{k,f}\rangle
=
\sigma_k
\bigl|(\overline{\nabla}f)^\perp\bigr|^2
-
(k+1)\sigma_{k+1}
\langle\overline{\nabla}f,\nu\rangle,
\]
it follows from Proposition~\ref{l1f} that
\[
L_{k,f} f
=
-\sigma_k f
+
\langle\overline{\nabla}f,\bm{H}_{k,f}\rangle
+
\langle\nabla f,AP_{k-1}(\nabla f)\rangle
+
\frac{n-k}{2}\sigma_k.
\]
\end{proof}

\subsection{Proof of Theorem \ref{Tp2}}\label{proofTp2}

\begin{theorem}[Theorem~\ref{Tp2}]
Let $\Sigma^n$ be an orientable hypersurface properly immersed in $\mathbb{R}^{n+1}$ such that $\big|(\overline{\nabla}f)^\perp\big||\bm{H}_{k,f}|$ is bounded, where $f=\frac{|x|^2}{4}$.
Assume that $\sigma_k$ is bounded and that $P_k$ is coercive. Then 
\[
\displaystyle\int_{\Sigma} e^{-f} dv_g < +\infty
\]
and $\Sigma$ has polynomial volume growth. More precisely, there exist constants $\beta>0$ and $a \geq 0$ such that, for $r \geq \sqrt{2\beta}$,
\[
\vol(B_r(0) \cap \Sigma) \le C r^{\frac{2 a}{\beta}},
\]
where $C = e^{\frac{\beta}{2}}
(2\beta)^{-\frac{a}{\beta}}
\int_{\Sigma} e^{-f}\,dv_g$.
\end{theorem}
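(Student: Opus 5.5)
The plan is to apply Theorem~\ref{Tp1} with $M=\Sigma$ (with the induced metric, complete since the immersion is proper), $P=P_k$, $f=\frac{|x|^2}{4}$, $\alpha=1$, and $\beta=\delta$, where $\delta>0$ is the coercivity constant of $P_k$. The function $f$ is smooth and nonnegative, and it is proper on $\Sigma$ because the immersion is proper. Since $P_n=0$ cannot be coercive, necessarily $0\le k\le n-1$. Hence Remark~\ref{cisk} applies and gives $\sigma_k\ge\delta$.

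\textbf{Step 1: rewrite $L_kf$.} Since $\bm{H}_{k,f}$ is normal, $\langle\overline{\nabla}f,\bm{H}_{k,f}\rangle=\langle(\overline{\nabla}f)^\perp,\bm{H}_{k,f}\rangle$. Moreover,
\[
\langle\overline{\nabla}f,\bm{H}_{k,f}\rangle=\sigma_k\bigl|(\overline{\nabla}f)^\perp\bigr|^2-(k+1)\sigma_{k+1}\langle\overline{\nabla}f,\nu\rangle .
\]
Substituting this into Proposition~\ref{l1f} gives
\[
\operatorname{div}(P_k(\nabla f))=L_kf=\langle(\overline{\nabla}f)^\perp,\bm{H}_{k,f}\rangle-\sigma_k\bigl|(\overline{\nabla}f)^\perp\bigr|^2+\frac{n-k}{2}\sigma_k .
\]

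\textbf{Step 2: the second and third inequalities.} Set
\[
a:=\sup_\Sigma\bigl|(\overline{\nabla}f)^\perp\bigr||\bm{H}_{k,f}|+\frac{n-k}{2}\sup_\Sigma\sigma_k<\infty .
\]
Because $\sigma_k>0$, the middle term in Step 1 is nonpositive, so $\operatorname{div}(P_k(\nabla f))\le a$. The third hypothesis holds by coercivity: $\langle P_k(\nabla f),\nabla f\rangle\ge\delta|\nabla f|^2\ge0$.

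\textbf{Step 3: the first inequality (the delicate step).} The issue is to produce the term $\beta f$ from $-\alpha\langle P_k(\nabla f),\nabla f\rangle$. Coercivity only controls the tangential part $|\nabla f|^2$, while $f$ involves the full gradient. The key identity is
\[
f=|\overline{\nabla}f|^2=|\nabla f|^2+\bigl|(\overline{\nabla}f)^\perp\bigr|^2 .
\]
It gives
\[
-\langle P_k(\nabla f),\nabla f\rangle\le-\delta f+\delta\bigl|(\overline{\nabla}f)^\perp\bigr|^2 .
\]
The leftover positive normal term is then absorbed by the negative term $-\sigma_k|(\overline{\nabla}f)^\perp|^2$ from Step 1, using $\sigma_k\ge\delta$. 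This yields
\[
\operatorname{div}(P_k(\nabla f))-\langle P_k(\nabla f),\nabla f\rangle+\delta f\le a .
\]
This absorption is exactly why $\alpha=1$ and $\beta=\delta$ are the right choices.

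\textbf{Step 4: conclude and change variables.} Theorem~\ref{Tp1}(i) gives $\int_\Sigma e^{-f}\,dv_g<\infty$. Theorem~\ref{Tp1}(ii) gives $\vol(D_r)\le e^{\beta/2}\bigl(\int_\Sigma e^{-f}\bigr)r^{2a/\beta}$ for $r\ge1$. Since $2\sqrt f=|x|$, we have $D_r=B_{\sqrt{2\beta}\,r}(0)\cap\Sigma$. For $R\ge\sqrt{2\beta}$, put $r=R/\sqrt{2\beta}\ge1$. Then
\[
\vol(B_R(0)\cap\Sigma)\le e^{\beta/2}(2\beta)^{-a/\beta}\Bigl(\int_\Sigma e^{-f}\,dv_g\Bigr)R^{2a/\beta},
\]
which is the stated estimate.
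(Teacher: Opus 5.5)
Your proposal is correct and follows the paper's proof: apply Theorem~\ref{Tp1} with $P=P_k$, $\alpha=1$, $\beta=\delta$, where your key inequality $\sigma_k|(\overline{\nabla}f)^\perp|^2+\langle P_k(\nabla f),\nabla f\rangle\ge\delta f$ is exactly the one the paper uses. The paper phrases it through Proposition~\ref{Lrff} and the term $\langle\nabla f,AP_{k-1}(\nabla f)\rangle$, whereas you go directly through Proposition~\ref{l1f}, and your remark that coercivity forces $k\le n-1$ (since $P_n=0$) usefully justifies the appeal to Remark~\ref{cisk}, which the paper leaves implicit.
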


\begin{proof}
Since $P_k$ is coercive, there exists a constant $\delta>0$ such that
\[
\langle P_k(X),X\rangle\geq \delta |X|^2
\]
for every $X\in T\Sigma$. Moreover, by Remark \ref{cisk}, $\sigma_k\geq\delta$. Therefore,
\begin{align*}
\sigma_k f-\langle \nabla f,AP_{k-1}(\nabla f)\rangle
&=\sigma_k\left(|\nabla f|^2+\left|(\overline{\nabla}f)^\perp\right|^2\right)
-\langle \nabla f,AP_{k-1}(\nabla f)\rangle \\
&=\sigma_k\left|(\overline{\nabla}f)^\perp\right|^2
+\langle P_k(\nabla f),\nabla f\rangle \\
&\geq \delta \left|(\overline{\nabla}f)^\perp\right|^2
+\delta|\nabla f|^2 \\
&= \beta\left(
\left|(\overline{\nabla}f)^\perp\right|^2+|\nabla f|^2
\right)
=\beta f\geq0,
\end{align*}
where $\beta:= \delta>0$. Here, we used the identities $f=|\overline{\nabla}f|^2
=|\nabla f|^2+\left|(\overline{\nabla}f)^\perp\right|^2$.

By Proposition~\ref{Lrff}, we have
\[
\begin{aligned}
L_{k,f}f + \sigma_kf - \langle \nabla f, AP_{k-1}(\nabla f)\rangle &= \langle \overline{\nabla} f, \bm{H}_{k,f} \rangle  + \frac{n-k}{2} \sigma_k \\
&\leq \left|\left(\overline{\nabla} f\right)^\perp\right| \left|\bm{H}_{k,f}\right| + \frac{n-k}{2} \sigma_k.
\end{aligned}
\]
Hence,
\[
L_{k,f}f + \beta f \le a,
\]
where 
\[
a := \sup_\Sigma
\left(\big|(\overline{\nabla} f)^\perp\big|\,|\bm{H}_{k,f}|
+ \frac{n-k}{2}\sigma_k \right) < +\infty,
\]
since $\big|(\overline{\nabla} f)^\perp\big|\,|\bm{H}_{k,f}|$ and $\sigma_k$ are bounded.

By Proposition~\ref{l1f}, we have 
\begin{align*}
L_k f  &= -\langle \overline{\nabla} f, (k+1)\sigma_{k+1} \nu \rangle + \frac{n-k}{2}\sigma_k \\
&= -\langle \overline{\nabla} f, \sigma_k \left(\overline{\nabla} f\right)^\perp - \bm{H}_{k,f} \rangle + \frac{n-k}{2}\sigma_k \\
&= -\sigma_k \left|\left(\overline{\nabla} f\right)^\perp \right|^2 + \langle \overline{\nabla} f, \bm{H}_{k,f} \rangle +  \frac{n-k}{2}\sigma_k \\
&\leq a,
\end{align*}
because $\sigma_k > 0$.

We now apply Theorem~\ref{Tp1} with $P=P_k$ and $\alpha=1$.
Since the immersion is proper, $\Sigma$ is complete and
$f=\frac{|x|^2}{4}$ is a proper nonnegative function. Moreover,
\[
\langle P_k(\nabla f),\nabla f\rangle
\geq
\delta|\nabla f|^2
\geq 0.
\]
Therefore,
\[
\int_\Sigma e^{-f}\,dv_g<+\infty
\]
and, for every $s\geq1$,
\[
\operatorname{Vol}(D_s)
\leq
e^{\frac{\beta}{2}}
s^{\frac{2a}{\beta}}
\int_\Sigma e^{-f}\,dv_g,
\]
where
\[
D_s
=
\left\{
x\in\Sigma:
2\sqrt{f}\leq\sqrt{2\beta}\,s
\right\}
=
B_{\sqrt{2\beta}\,s}(0)\cap\Sigma.
\]
Setting $r=\sqrt{2\beta}\,s$, we obtain, for $r\geq\sqrt{2\beta}$,
\[
\operatorname{Vol}(B_r(0)\cap\Sigma)
\leq
C r^{\frac{2a}{\beta}},
\]
where $C= e^{\frac{\beta}{2}}
(2\beta)^{-\frac{a}{\beta}}
\int_\Sigma e^{-f}\,dv_g$.
\end{proof}

Since $\bm{H}_{k,f}\equiv0$ for self-shrinkers of the $Q_k$-flow,
Theorem~\ref{Tp2} immediately yields the following consequence.

\begin{cor}
Let $\Sigma^n$ be an orientable self-shrinker of the $Q_k$-flow
properly immersed in $\mathbb{R}^{n+1}$. If $\sigma_k$ is bounded and
$P_k$ is coercive, then $\Sigma$ has finite weighted volume and
polynomial volume growth.
\end{cor}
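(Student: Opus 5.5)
The plan is to reduce the corollary directly to Theorem~\ref{Tp2}. The only hypothesis of Theorem~\ref{Tp2} not literally assumed in the corollary is that $\big|(\overline{\nabla}f)^\perp\big|\,|\bm{H}_{k,f}|$ is bounded, where $f=\frac{|x|^2}{4}$. So the first step is to show that $\bm{H}_{k,f}\equiv 0$ on any self-shrinker of the $Q_k$-flow.

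On $\Sigma$ we have $\overline{\nabla}f=\frac{x}{2}$, hence $(\overline{\nabla}f)^\perp=\frac12\langle x,\nu\rangle\nu$. The self-shrinker equation $\sigma_{k+1}/\sigma_k=\frac{1}{2(k+1)}\langle x,\nu\rangle$ can be multiplied through by $(k+1)\sigma_k$; this is legitimate because $\sigma_k\neq0$ by definition, and indeed $\sigma_k\ge\delta>0$ by Remark~\ref{cisk} since $P_k$ is coercive. This gives
\[
(k+1)\sigma_{k+1}=\tfrac12\sigma_k\langle x,\nu\rangle .
\]
Substituting into the definition, we get
\[
\bm{H}_{k,f}=\tfrac12\sigma_k\langle x,\nu\rangle\nu-(k+1)\sigma_{k+1}\nu=0 .
\]
Hence the product $\big|(\overline{\nabla}f)^\perp\big|\,|\bm{H}_{k,f}|$ vanishes identically and is in particular bounded.

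Every remaining hypothesis of Theorem~\ref{Tp2} now holds: $\Sigma$ is orientable and properly immersed, $\sigma_k$ is bounded, and $P_k$ is coercive. Theorem~\ref{Tp2} therefore yields $\int_\Sigma e^{-f}\,dv_g<+\infty$, which is the finite weighted volume, and the estimate $\vol(B_r(0)\cap\Sigma)\le Cr^{2a/\beta}$ for $r\ge\sqrt{2\beta}$, which is polynomial volume growth. In this setting the constant is simply $a=\frac{n-k}{2}\sup_\Sigma\sigma_k$. There is no real obstacle here; the only point needing care is verifying $\bm{H}_{k,f}=0$ with the sign conventions for $\nu$ used in the paper, which match the definitions as computed above.
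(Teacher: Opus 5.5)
Your proposal is correct and follows the paper's argument exactly. The paper likewise observes that $\bm{H}_{k,f}\equiv 0$ on a self-shrinker of the $Q_k$-flow and then applies Theorem~\ref{Tp2} directly; your explicit check of $\bm{H}_{k,f}=0$ via $\sigma_k\geq\delta>0$ (Remark~\ref{cisk}) and the resulting value $a=\frac{n-k}{2}\sup_\Sigma\sigma_k$ agree with the paper's proof of Theorem~\ref{Tp2}.
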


When $k=n-1$, the boundedness of $\sigma_{n-1}$ together with the coercivity of $P_{n-1}$ yields an even stronger conclusion.
\begin{proposition}
Let $\Sigma^n$, $n\geq2$, be a complete orientable hypersurface
immersed in $\mathbb{R}^{n+1}$. Assume that $\sigma_{n-1}$ is bounded
and that $P_{n-1}$ is coercive. Then $\Sigma$ is compact.
\end{proposition}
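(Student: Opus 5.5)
The plan is to turn the two hypotheses into a uniform two-sided control of every principal curvature, and then conclude with the Gauss equation and Bonnet--Myers.

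\textbf{Step 1 (each product is pinched).} Fix $p\in\Sigma$ and an orthonormal frame with $Ae_i=\lambda_ie_i$. By Lemma~\ref{l1bc}(d) with $k=n-1$,
\[
P_{n-1}e_i=\sigma_{n-1}(A_i)e_i=\Bigl(\prod_{j\neq i}\lambda_j\Bigr)e_i .
\]
Coercivity of $P_{n-1}$ with constant $\delta$ gives $\mu_i:=\prod_{j\neq i}\lambda_j\geq\delta$ for every $i$. Since $\sigma_{n-1}=\sum_i\mu_i$ is bounded by some constant $C$ and every $\mu_i$ is positive, we also get $\mu_i\le C$. Thus each $\mu_i$ lies in $[\delta,C]$, uniformly on $\Sigma$.

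\textbf{Step 2 (the principal curvatures are pinched, with a common sign).} Multiply all the $\mu_i$:
\[
\prod_i\mu_i=\sigma_n^{\,n-1}\geq\delta^n>0 .
\]
Hence $|\sigma_n|\geq\delta^{n/(n-1)}$ and $\sigma_n\neq0$. Now $\lambda_i\mu_i=\sigma_n$, so $\lambda_i=\sigma_n/\mu_i$. Because every $\mu_i>0$, all the $\lambda_i$ have the sign of $\sigma_n$, and
\[
|\lambda_i|\geq \frac{\delta^{n/(n-1)}}{C}=:c>0 .
\]
If $\Sigma$ is connected, this sign is locally constant and $\sigma_n$ never vanishes, so it is globally constant. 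Replacing $\nu$ by $-\nu$ if needed (this only multiplies $A$ by $-1$), we may assume $\lambda_i\ge c$ for all $i$ everywhere on $\Sigma$.

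\textbf{Step 3 (curvature and compactness).} Since $\mathbb{R}^{n+1}$ is flat, the Gauss equation says the sectional curvature of the plane spanned by $e_i,e_j$ is $\lambda_i\lambda_j\ge c^2$. For a general plane, with orthonormal $X,Y$, it gives
\[
K(X,Y)=\langle AX,X\rangle\langle AY,Y\rangle-\langle AX,Y\rangle^2 .
\]
Because $A\ge c\,I$ is positive definite, the Cauchy--Schwarz inequality for the inner product induced by $A-cI$ yields $K(X,Y)\ge c^2$. In particular $\mathrm{Ric}\geq (n-1)c^2>0$, which is where $n\ge2$ is needed. Since $\Sigma$ is complete, Bonnet--Myers gives $\operatorname{diam}\Sigma\le\pi/c$, so $\Sigma$ is compact.

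The main obstacle is Step~2: coercivity alone only controls the products $\mu_i$, and one has to rule out some $\lambda_i$ tending to zero, or a sign change, while the others blow up. The resolution is that the upper bound on $\sigma_{n-1}$ caps each $\mu_i$, and the identity $\lambda_i\mu_i=\sigma_n$ then transfers the lower bound on $|\sigma_n|$ to every principal curvature. The plane-by-plane verification of $K\ge c^2$ in Step~3 is routine.
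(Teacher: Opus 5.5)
Your proof is correct and follows essentially the same route as the paper's. You pinch $\mu_i\in[\delta,C]$ using coercivity and $\sigma_{n-1}=\sum_i\mu_i$, use $\prod_i\mu_i=\sigma_n^{\,n-1}$ and $\lambda_i=\sigma_n/\mu_i$ to obtain $|\lambda_i|\ge c$ with a common sign, and then conclude with the Gauss equation and Bonnet--Myers; the only difference is that you verify $K\ge c^2$ on every plane via Cauchy--Schwarz for $A-cI$, whereas the paper checks only principal planes and reads off the Ricci bound in the principal frame.
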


\begin{proof}
Let $\lambda_1,\ldots,\lambda_n$ be the principal curvatures of
$\Sigma$, and let $\{e_1,\ldots,e_n\}$ be a principal frame. Denote by
$\mu_i$ the eigenvalue of $P_{n-1}$ corresponding to $e_i$. By
item~(d) of Lemma~\ref{l1bc},
\[
P_{n-1}e_i
=
\sigma_{n-1}(A_i)e_i=\mu_i e_i,
\]
where $A_i$ denotes the restriction of $A$ to the orthogonal complement
of $e_i$. Since $A_i$ has eigenvalues $\lambda_1,\ldots,\widehat{\lambda_i},\ldots,\lambda_n$, we have
\[
\mu_i
=
\prod_{j\neq i}\lambda_j.
\]

Since $P_{n-1}$ is coercive, there exists $\delta>0$ such that $\mu_i\geq\delta$ for every $i$. Moreover, by item~(a) of Lemma \ref{l1bc}, $\operatorname{tr}(P_{n-1}) = \sigma_{n-1}$, and hence
\[
\sigma_{n-1}
=
\sum_{i=1}^n\mu_i.
\]
Since $\sigma_{n-1}$ is bounded, there exists $M>0$ such that $\delta\leq\mu_i\leq M$ for every $i$.
Now,
\[
\prod_{i=1}^n\mu_i
=
\prod_{i=1}^n\prod_{j\neq i}\lambda_j
=
\left(\prod_{j=1}^n\lambda_j\right)^{n-1},
\]
because each principal curvature $\lambda_j$ occurs in exactly $n-1$
of the products $\mu_i$. Hence
\[
\left|\prod_{j=1}^n\lambda_j\right|
=
\left(\prod_{i=1}^n\mu_i\right)^{\frac{1}{n-1}}
\geq
\delta^{\frac{n}{n-1}}.
\]
Since $\mu_i = \frac{\prod_{j=1}^n\lambda_j}{\lambda_i}$,
we obtain
\[
|\lambda_i|
=
\frac{\left|\prod_{j=1}^n\lambda_j\right|}{\mu_i}
\geq
\frac{\delta^{\frac{n}{n-1}}}{M}
=:c>0.
\]

Furthermore, for $i\neq j$,
\[
\frac{\mu_i}{\mu_j}
=
\frac{\lambda_j}{\lambda_i}>0,
\]
since $\mu_i,\mu_j>0$. Thus all principal curvatures have the same
sign. Consequently,
\[
\lambda_i\lambda_j
=
|\lambda_i|\,|\lambda_j|
\geq c^2
\]
for every $i\neq j$.

By the Gauss equation, $K(e_i,e_j)
=
\lambda_i\lambda_j
\geq c^2$,
and hence
\[
\operatorname{Ric}(e_i,e_i)
=
\sum_{j\neq i}\lambda_i\lambda_j
\geq
(n-1)c^2.
\]
Therefore, $\operatorname{Ric}\geq(n-1)c^2g$.
Since $\Sigma$ is complete, the Bonnet--Myers theorem implies that
$\Sigma$ is compact.
\end{proof}

The proof of Theorem~6 in~\cite{MR4724121} yields the following divergence lemma.
\begin{lemma}\label{divg}
Let $\Sigma^n$ be a hypersurface properly immersed in
$\mathbb{R}^{n+1}$ with polynomial volume growth. Let $T$ be a smooth
tangent vector field on $\Sigma$. Assume that there exist constants
$C>0$ and $d\geq0$ such that
\[
|\operatorname{div} T(x)|
\leq C(1+|x|)^d.
\]
Then
\[
\int_\Sigma
\operatorname{div}\left(e^{-\frac{|x|^2}{4}}T\right)\,dv_g
=0.
\]
\end{lemma}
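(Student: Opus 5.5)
The plan is to work with the exhaustion of $\Sigma$ by the sets $D_r=\Sigma\cap \overline{B_r(0)}$ and exploit that the weight $e^{-f}$, with $f=\frac{|x|^2}{4}$, is \emph{constant} on $\partial D_r$. This lets the boundary term be rewritten so that only $\operatorname{div}T$ appears, and no growth bound on $|T|$ is needed. The integral in the statement will be understood as $\lim_{r\to\infty}\int_{D_r}\operatorname{div}(e^{-f}T)\,dv_g$. This limit agrees with the Lebesgue integral whenever $\operatorname{div}(e^{-f}T)$ is integrable, since then dominated convergence applies along the exhaustion. It is also the sense in which the identity is used in the applications, e.g.\ in the proof of Theorem~\ref{hiperplan}.

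\emph{Step 1 (regular values).} The immersion is proper, so each $D_r$ is compact. By Sard's theorem applied to $|x|^2$ restricted to $\Sigma$, almost every $r>0$ is a regular value. For such $r$, $D_r$ is a compact domain with smooth boundary $\partial D_r=\{|x|=r\}\cap\Sigma$, and on it $f\equiv r^2/4$.

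\emph{Step 2 (key identity).} For a regular value $r$, the divergence theorem applied twice gives
\[
\int_{D_r}\operatorname{div}\bigl(e^{-f}T\bigr)\,dv_g
=\int_{\partial D_r}e^{-f}\langle T,\mathbf n\rangle
=e^{-\frac{r^2}{4}}\int_{\partial D_r}\langle T,\mathbf n\rangle
=e^{-\frac{r^2}{4}}\int_{D_r}\operatorname{div}T\,dv_g ,
\]
where $\mathbf n$ is the outward unit conormal along $\partial D_r$.

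\emph{Step 3 (estimate).} The hypothesis $|\operatorname{div}T|\le C(1+|x|)^d$ gives $|\operatorname{div}T|\le C(1+r)^d$ on $D_r$. Polynomial volume growth gives $\vol(D_r)\le C'r^{m}$ for large $r$. Combining these with Step 2,
\[
\Bigl|\int_{D_r}\operatorname{div}\bigl(e^{-f}T\bigr)\,dv_g\Bigr|\le CC'(1+r)^{d}r^{m}e^{-\frac{r^2}{4}}\longrightarrow 0
\]
along regular values $r\to\infty$.

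\emph{Step 4 (passing to all $r$).} The map $r\mapsto\int_{D_r}\operatorname{div}(e^{-f}T)$ is right-continuous, and the same bound holds uniformly on its jumps (all of which lie in $D_r$). Hence the limit along all $r\to\infty$ exists and equals $0$, which is the claim.

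The main obstacle is the interpretation of the integral. Without control on $|T|$, the integrand $e^{-f}(\operatorname{div}T-\langle\nabla f,T\rangle)$ need not be absolutely integrable. For this reason the proof is deliberately phrased through the exhaustion $D_r$, which is exactly the device in Theorem~6 of~\cite{MR4724121}. Steps 2 and 3 are routine once the sublevel-set boundaries are used instead of a cutoff function: a cutoff would produce the term $\langle\nabla\varphi_R,T\rangle$, and that term genuinely requires a bound on $|T|$.
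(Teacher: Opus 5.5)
Your proof is correct and is essentially the argument the paper defers to; the paper itself only cites the proof of Theorem~6 in \cite{MR4724121}. On a regular sublevel set $D_r$ the weight $e^{-f}$ equals $e^{-r^2/4}$ on $\partial D_r$, so the flux of $e^{-f}T$ is $e^{-r^2/4}\int_{D_r}\operatorname{div}T$, which polynomial volume growth drives to zero, and your right-continuity step correctly handles the non-regular radii.

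Reading the integral as the limit along this exhaustion is the right interpretation in general. In the paper's applications it coincides with the Lebesgue integral: coercivity of $P_k$ together with the bound on $\operatorname{tr}P_k=(n-k)\sigma_k$ makes $P_k$ bounded, so $\operatorname{div}(e^{-f}T)$ is integrable for $T=P_k(p^\top)$ and $T=P_k(x^\top)$.
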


As a consequence of Theorem~\ref{Tp2} and Lemma~\ref{divg}, we obtain the following result:
\begin{theorem}[Theorem \ref{hiperplan}]
Let $\Sigma^n$ be an orientable self-shrinker of the $Q_k$-flow properly immersed in $\mathbb{R}^{n+1}$. Suppose that $\sigma_k$ is bounded and that $P_k$ is coercive. Let $p\in\mathbb{R}^{n+1}\setminus\{0\}$. If
\[
\int_{\Sigma}
\left\langle
\nabla\left(\frac{\sigma_{k+1}}{\sigma_k}\right),
P_{k-1}(p^\top)
\right\rangle e^{-f}\,dv_g
\leq 0,
\]
then either $\Sigma\not\subset
\left\{
x\in\mathbb{R}^{n+1}:\langle x,p\rangle\geq0
\right\}$, or $k=0$ and $\Sigma
=
\left\{
x\in\mathbb{R}^{n+1}:\langle x,p\rangle=0
\right\}$.

Similarly, if
\[
\int_{\Sigma}
\left\langle
\nabla\left(\frac{\sigma_{k+1}}{\sigma_k}\right),
P_{k-1}(p^\top)
\right\rangle e^{-f}\,dv_g
\geq 0,
\]
then either $\Sigma\not\subset
\left\{
x\in\mathbb{R}^{n+1}:\langle x,p\rangle\leq0
\right\}$, or $k=0$ and $\Sigma
=
\left\{
x\in\mathbb{R}^{n+1}:\langle x,p\rangle=0
\right\}$.

In particular, if $Q_k$ is constant and $k\geq1$, then $\Sigma$ cannot
be contained in either closed half-space determined by a hyperplane
through the origin. If $k=0$, then a self-shrinker contained in one of
these closed half-spaces must coincide with its boundary hyperplane.
\end{theorem}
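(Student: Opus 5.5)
The plan is to test the self-shrinker equation against the linear function $\phi=\langle x,p\rangle$. We compute $L_{k,f}\phi$ explicitly, integrate it against $e^{-f}$, and show by Lemma~\ref{divg} that the integral vanishes. This gives a weighted integral identity relating $\int_\Sigma \sigma_k\phi\,e^{-f}$ to the integral appearing in the hypothesis.

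\emph{Step 1 (computing $L_{k,f}\phi$).} By Lemma~\ref{lema52},
\[
L_k\phi=-(k+1)\sigma_{k+1}\langle p,\nu\rangle .
\]
The self-shrinker equation gives $(k+1)\sigma_{k+1}=\tfrac{1}{2}\sigma_k\langle x,\nu\rangle$. Since $\nabla\phi=p^\top$ and $\nabla f=\tfrac12 x^\top$, the relation $P_k=\sigma_k I-AP_{k-1}$ gives
\[
\langle\nabla f,P_k(\nabla\phi)\rangle=\tfrac{\sigma_k}{2}\langle x^\top,p^\top\rangle-\tfrac12\langle Ax^\top,P_{k-1}(p^\top)\rangle .
\]
Here I use that $A$ and $P_{k-1}$ are symmetric and commute. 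Adding the two pieces, the normal and tangential parts combine via $\langle x,p\rangle=\langle x^\top,p^\top\rangle+\langle x,\nu\rangle\langle p,\nu\rangle$, so that
\[
L_{k,f}\phi=-\tfrac{\sigma_k}{2}\phi+\tfrac12\langle Ax^\top,P_{k-1}(p^\top)\rangle .
\]
Next, with $A=\overline\nabla\nu$ we have $\nabla\langle x,\nu\rangle=Ax^\top$. Differentiating the self-shrinker equation $\langle x,\nu\rangle=2(k+1)\sigma_{k+1}/\sigma_k$ then gives $Ax^\top=2(k+1)\nabla(\sigma_{k+1}/\sigma_k)$. Therefore
\[
L_{k,f}\phi=-\tfrac{\sigma_k}{2}\phi+(k+1)\Bigl\langle\nabla\Bigl(\tfrac{\sigma_{k+1}}{\sigma_k}\Bigr),P_{k-1}(p^\top)\Bigr\rangle .
\]
With the convention $P_{-1}=0$ this is also valid for $k=0$. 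Checking the sign conventions here is the step I would verify most carefully.

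\emph{Step 2 (integral identity).} By the Corollary following Theorem~\ref{Tp2}, $\Sigma$ has polynomial volume growth and finite weighted volume. I apply Lemma~\ref{divg} with $T=P_k(\nabla\phi)$. Its divergence is
\[
\operatorname{div}T=L_k\phi=-\tfrac12\sigma_k\langle x,\nu\rangle\langle p,\nu\rangle ,
\]
which is bounded by $C(1+|x|)$ because $\sigma_k$ is bounded. Hence $\int_\Sigma L_{k,f}\phi\,e^{-f}=\int_\Sigma\operatorname{div}(e^{-f}T)=0$. The term $\sigma_k\phi e^{-f}$ is integrable by polynomial growth, so the remaining term is integrable as well, and we obtain
\[
\int_\Sigma\sigma_k\phi\,e^{-f}\,dv_g=2(k+1)\int_\Sigma\Bigl\langle\nabla\Bigl(\tfrac{\sigma_{k+1}}{\sigma_k}\Bigr),P_{k-1}(p^\top)\Bigr\rangle e^{-f}\,dv_g .
\]

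\emph{Step 3 (conclusion).} Suppose $\Sigma\subset\{\langle x,p\rangle\ge0\}$ and the hypothesis integral is $\le0$. Then the left-hand side is $\ge0$ and the right-hand side is $\le0$. By Remark~\ref{cisk}, $\sigma_k\ge\delta>0$, so $\phi\equiv0$ and $\Sigma$ lies in the hyperplane $\{\langle x,p\rangle=0\}$. Being an $n$-dimensional immersed hypersurface, $\Sigma$ is open in that hyperplane. Properness (hence completeness) makes it closed, so $\Sigma$ equals the hyperplane. A hyperplane has $A=0$, so $\sigma_k=0$ for $k\ge1$, contradicting coercivity of $P_k$; hence $k=0$. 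The second statement follows by replacing $p$ with $-p$, which reverses the sign of both sides of the identity. If $Q_k$ is constant, the hypothesis integral vanishes, so both alternatives apply and the final claims follow.
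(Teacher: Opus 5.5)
Your proposal is correct and follows essentially the paper's proof. The paper applies Lemma~\ref{divg} to the same field $T=P_k(p^\top)=P_k(\nabla\phi)$ and derives the same identity $\operatorname{div}(e^{-f}P_k(p^\top))=e^{-f}\bigl[-\frac{\sigma_k}{2}\langle x,p\rangle+(k+1)\langle\nabla(\sigma_{k+1}/\sigma_k),P_{k-1}(p^\top)\rangle\bigr]$; it then concludes exactly as you do, via $\sigma_k\ge\delta>0$ and the open--closed argument, and writing the computation as $L_{k,f}\phi$ instead of expanding the divergence directly is only a cosmetic difference.
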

\begin{proof}
First, assume that
\[
\int_{\Sigma}
\left\langle
\nabla\left(\frac{\sigma_{k+1}}{\sigma_k}\right),
P_{k-1}(p^\top)
\right\rangle
e^{-f}dv_g
\leq 0,
\]
and suppose, by contradiction, that
\[
\langle x,p\rangle\geq 0
\]
on $\Sigma$.

Using the facts that $\operatorname{div}P_k=0$ and
\[
\nabla p^\top=-\langle p,\nu\rangle A,
\]
together with \eqref{eqdivtr} and Lemma \ref{l1bc}, we obtain
\begin{align*}
\operatorname{div}\left(P_k(p^\top)\right)
=-(k+1)\sigma_{k+1}\langle p,\nu\rangle =-\frac{\sigma_k}{2}\langle x,\nu\rangle\langle p,\nu\rangle.
\end{align*}
The second equality follows from the self-shrinker equation
\[
\frac{\sigma_{k+1}}{\sigma_k} = \frac{1}{2(k+1)}\langle x,\nu\rangle.
\]
Since $\sigma_k$ is bounded, it follows that
\[
\left|
\operatorname{div}\left(P_k(p^\top)\right)
\right|
\leq C|x|
\]
for some constant $C>0$.

Next, we compute
\begin{align}\label{eqthp}
\operatorname{div}\left(e^{-f}P_k(p^\top)\right)
\nonumber
&=
e^{-f}
\left[
\operatorname{div}\left(P_k(p^\top)\right)
-\frac{1}{2}\langle x,P_k(p^\top)\rangle
\right] \\
\nonumber
&=
e^{-f}
\left[
-(k+1)\sigma_{k+1}\langle p,\nu\rangle
-\frac{1}{2}\langle x,P_k(p^\top)\rangle
\right] \\
\nonumber
&=
e^{-f}
\left[
-(k+1)\sigma_{k+1}\langle p,\nu\rangle
-\frac{1}{2}\langle P_k(x^\top),p\rangle
\right] \\
\nonumber
&=
e^{-f}
\left\langle
-(k+1)\sigma_{k+1}\nu
-\frac{1}{2}P_k(x^\top),
p
\right\rangle \\
\nonumber
&=
e^{-f}
\left\langle
-\frac{\sigma_k}{2}x^\perp
-\frac{1}{2}P_k(x^\top),
p
\right\rangle \\
\nonumber
&=
e^{-f}
\left\langle
-\frac{\sigma_k}{2}x^\perp
-\frac{\sigma_k}{2}x^\top + \frac{1}{2}AP_{k-1}(x^\top),
p
\right\rangle \\
\nonumber
&=
 -\frac{\sigma_k}{2} \langle x, p  \rangle e^{-f} + \frac{1}{2}\left\langle A(x^\top),
P_{k-1}(p^\top)
\right\rangle e^{-f}\\
&=
-\frac{\sigma_k}{2}\langle x,p\rangle e^{-f}
+
(k+1)
\left\langle
\nabla\left(\frac{\sigma_{k+1}}{\sigma_k}\right),
P_{k-1}(p^\top)
\right\rangle
e^{-f}.
\end{align}
Here, we used the symmetry of $A$ and $P_{k-1}$, the identity
$AP_{k-1}=P_{k-1}A$, the self-shrinker equation, and the identity
\[
\nabla\left(\frac{\sigma_{k+1}}{\sigma_k}\right) = \frac{1}{2(k+1)}A(x^\top).
\]
Indeed, with respect to a local orthonormal frame $\{e_i\}_{i=1}^n$, we have
\begin{align*}
\nabla_{e_i}\left(\frac{\sigma_{k+1}}{\sigma_k}\right)
&=
\frac{1}{2(k+1)}e_i\langle x,\nu\rangle \\
&=
\frac{1}{2(k+1)}
\left(
\left\langle
\overline{\nabla}_{e_i}x,\nu
\right\rangle
+
\left\langle
x,\overline{\nabla}_{e_i}\nu
\right\rangle
\right) \\
&=
\frac{1}{2(k+1)}
\left\langle
A(x^\top),e_i
\right\rangle.
\end{align*}
By Theorem~\ref{Tp2}, $\Sigma$ has polynomial volume growth.
Therefore, integrating \eqref{eqthp} over $\Sigma$ and applying Lemma~\ref{divg} to the tangent vector field
$T=P_k(p^\top)$, we obtain
\begin{align*}
0
&=
\int_{\Sigma}
\operatorname{div}\left(e^{-f}P_k(p^\top)\right) dv_g \\
&=
\int_{\Sigma}
\left[
-\frac{\sigma_k}{2}\langle x,p\rangle
+
(k+1)
\left\langle
\nabla\left(\frac{\sigma_{k+1}}{\sigma_k}\right),
P_{k-1}(p^\top)
\right\rangle
\right]
e^{-f}dv_g. \\
\end{align*}
Since $\sigma_k>0$ by Remark \ref{cisk}, $\langle x,p\rangle\geq0$, and
\[
\int_{\Sigma}
\left\langle
\nabla\left(\frac{\sigma_{k+1}}{\sigma_k}\right),
P_{k-1}(p^\top)
\right\rangle
e^{-f}dv_g
\leq0,
\]
it follows that
\begin{align}\label{eqab}
\int_{\Sigma}
\sigma_k\langle x,p\rangle e^{-f}dv_g =0.
\end{align}
By Remark \ref{cisk}, there exists $\delta>0$ such that $\sigma_k\geq\delta$.
Therefore, \eqref{eqab}, together with $\langle x,p\rangle\geq0$, implies that $\langle x,p\rangle\equiv0$ on $\Sigma$. Hence, the image of $\Sigma$ is contained in the hyperplane $\{x\in\mathbb{R}^{n+1}:\langle x,p\rangle=0\}$. Since $\Sigma$ has the same dimension as this hyperplane, its
image is open in the hyperplane, while properness implies that its
image is closed. Since the hyperplane is connected, $\Sigma$ coincides
with the hyperplane. If $k\geq 1$, this contradicts the lower bound $\sigma_k\geq\delta>0$.

The second assertion follows from the first one by replacing $p$ with
$-p$.
\end{proof}

We now prove Theorem \ref{sphere}.
\begin{theorem}
Let $\Sigma^n$ be an orientable self-shrinker of
the $Q_k$-flow properly immersed in $\mathbb{R}^{n+1}$. Suppose that
$\sigma_k$ is bounded and that $P_k$ is coercive. If
\[
|x^\perp|^2\geq 2(n-k),
\]
then $\Sigma =\mathbb{S}^n(\sqrt{2(n-k)})$.
\end{theorem}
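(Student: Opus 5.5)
Since $\Sigma$ is a self-shrinker of the $Q_k$-flow, we have $\bm{H}_{k,f}=0$ with $f=\frac{|x|^2}{4}$. Because $\overline{\nabla}f=\frac{x}{2}$, we get $|(\overline{\nabla}f)^\perp|^2=\frac{|x^\perp|^2}{4}$. Proposition~\ref{l1f} then gives
\[
L_k f=\operatorname{div}\bigl(P_k(\nabla f)\bigr)=\frac{2(n-k)-|x^\perp|^2}{4}\,\sigma_k\le 0 ,
\]
using the hypothesis $|x^\perp|^2\ge 2(n-k)$ and $\sigma_k\ge\delta>0$ (Remark~\ref{cisk}). The strategy is to integrate the weighted divergence of $P_k(\nabla f)$ against $e^{-f}$ and extract enough sign information to force $|x^\perp|^2\equiv 2(n-k)$ and $x^\top\equiv 0$.

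\textbf{Weighted identity.} Compute
\[
\operatorname{div}\bigl(e^{-f}P_k(\nabla f)\bigr)=e^{-f}\Bigl(L_kf-\langle P_k(\nabla f),\nabla f\rangle\Bigr).
\]
Both terms in the bracket are nonpositive: the first by the display above, the second by coercivity. Theorem~\ref{Tp2} gives polynomial volume growth. Since $\sigma_k$ is bounded, $|L_kf|\le C(1+|x|^2)$, so Lemma~\ref{divg} applies with $T=P_k(\nabla f)=\frac12P_k(x^\top)$, and the integral of the left side vanishes. Consequently
\[
L_kf\equiv 0,\qquad \langle P_k(\nabla f),\nabla f\rangle\equiv 0 \quad\text{on }\Sigma .
\]
Coercivity then yields $\nabla f=\frac12 x^\top\equiv 0$, so $|x|^2$ is constant on $\Sigma$. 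Also $L_kf=0$ with $\sigma_k>0$ gives $|x^\perp|^2=2(n-k)$. Since $x^\top=0$, we have $|x|^2=|x^\perp|^2=2(n-k)$.

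\textbf{Conclusion.} Thus $\Sigma$ lies in $\mathbb{S}^n(\sqrt{2(n-k)})$. As an immersed $n$-manifold, its image is open in the sphere. By properness its image is closed. By connectedness, $\Sigma=\mathbb{S}^n(\sqrt{2(n-k)})$. (Alternatively, once compactness follows from boundedness of $|x|$ and properness, Corollary~\ref{cs} applies directly.)

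\textbf{Main obstacle.} The delicate step is justifying Lemma~\ref{divg}, i.e.\ the bound on $\operatorname{div}T$. The vector field $P_k(x^\top)$ may itself be unbounded, but the lemma only requires polynomial control of $\operatorname{div}T=L_kf$. By Proposition~\ref{l1f} this equals $\frac{2(n-k)-|x^\perp|^2}{4}\sigma_k$, which is $O(|x|^2)$ because $\sigma_k$ is bounded. Polynomial volume growth is supplied by Theorem~\ref{Tp2}, whose hypotheses hold since $\bm{H}_{k,f}=0$.
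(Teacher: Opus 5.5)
Your proof is correct and follows essentially the same route as the paper. The paper applies Lemma~\ref{divg} to $T=P_k(x^\top)=2P_k(\nabla f)$, gets the same two nonpositive terms in $\operatorname{div}(e^{-f}T)$, deduces $|x^\perp|^2=2(n-k)$ and $x^\top\equiv0$, and finishes with the same open--closed argument in the sphere.
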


\begin{proof}
By Theorem~\ref{Tp2}, $\Sigma$ has polynomial volume growth. Moreover,
since
\[
\operatorname{div}\bigl(P_k(x^\top)\bigr)
=
\left(
n-k-\frac12|x^\perp|^2
\right)\sigma_k,
\]
and $\sigma_k$ is bounded, we have
\[
\left|
\operatorname{div}\bigl(P_k(x^\top)\bigr)
\right|
\leq C(1+|x|)^2
\]
for some constant $C>0$. Hence Lemma~\ref{divg} applies to
$T=P_k(x^\top)$.

Using the self-shrinker equation, we obtain
\[
\begin{aligned}
\operatorname{div}\left(e^{-f}P_k(x^\top)\right)
&=
\left[
\left(n-k-\frac12|x^\perp|^2\right)\sigma_k
-\frac12
\left\langle P_k(x^\top),x^\top\right\rangle
\right]e^{-f}.
\end{aligned}
\]
Therefore,
\[
0=
\int_\Sigma
\left[
\left(n-k-\frac12|x^\perp|^2\right)\sigma_k
-\frac12
\left\langle P_k(x^\top),x^\top\right\rangle
\right]e^{-f}\,dv_g.
\]
Since $|x^\perp|^2\geq2(n-k)$ on $\Sigma$, $\sigma_k>0$ and $P_k$ is coercive, both terms in
the integrand are nonpositive. Consequently, they vanish identically.
Thus
\[
|x^\perp|^2=2(n-k)
\quad \text{and}\quad
\left\langle P_k(x^\top),x^\top\right\rangle=0.
\]
The coercivity of $P_k$ yields $x^\top\equiv0$.
Hence
\[
|x|^2=|x^\perp|^2=2(n-k),
\]
and therefore $\Sigma$ is contained in
$\mathbb{S}^n(\sqrt{2(n-k)})$. Since the immersion is proper and $\Sigma$ has the same dimension as
$\mathbb{S}^n(\sqrt{2(n-k)})$, its image is both open and closed in the
sphere; hence, by connectedness, $\Sigma=\mathbb{S}^n\left(\sqrt{2(n-k)}\right)$.
\end{proof}

\section{ The Simons cone and the Catenoid}\label{SimonsCone}
 We will describe Examples~\ref{exH} and~\ref{cat}
 mentioned in Remark \ref{rem:comparison-growth-conditions}. 

 Consider the Simons cone
\[
\mathcal{C}
=
C\left(
\mathbb{S}^3\left(\frac{\sqrt{2}}{2}\right)
\times
\mathbb{S}^3\left(\frac{\sqrt{2}}{2}\right)
\right)
=
\left\{
(x',x'')\in\mathbb{R}^4\times\mathbb{R}^4
:
|x'|=|x''|
\right\}
\subset\mathbb{R}^8.
\]
The Simons cone is a nonflat area-minimizing hypercone, smooth away
from its isolated singularity at the origin. Its stability was
established by Simons \cite{MR233295}, while its area-minimizing
property was subsequently proved by Bombieri, De Giorgi, and Giusti
\cite{MR250205}.

Since $\mathcal{C}$ is an area-minimizing hypercone with an isolated
singularity, the theorem of Hardt and Simon \cite{MR809969} yields
smooth area-minimizing hypersurfaces lying on the two sides of
$\mathcal{C}$ and asymptotic to $\mathcal{C}$ at infinity. We consider
a smooth, complete, properly embedded minimal hypersurface
\[
H^7\subset\mathbb{R}^8
\]
lying in the region $\{|x'|>|x''|\}$ and asymptotic to $\mathcal{C}$.

Following the description of $H$ used by Davini \cite{MR2076732}, we choose $H$ to be invariant under the natural
action of $SO(4)\times SO(4)$. Consequently, outside a compact set,
$H$ can be written as a normal graph over $\mathcal{C}$ with radial
height function
\[
x(r,\theta)
=
r\theta+rf(r)\nu_{\mathcal{C}}(\theta),
\]
where $\theta\in\mathcal{C}\cap\mathbb{S}^7$, $r>0$ denotes the radial coordinate on $\mathcal{C}$ and
$\nu_{\mathcal{C}}$ is the unit normal vector field along the regular
part of $\mathcal{C}$ oriented toward the region
$\{|x'|>|x''|\}$.

We are now ready to establish Example \ref{exH}. To this end, set
\[
h(r):=rf(r).
\]
By Proposition~3.3 in \cite{szekelyhidi2026uniqueness} and the observation immediately following its proof, $f(r)=O(r^{-3})$ and $f'(r)=O(r^{-4})$. Hence
\[
h(r)=O(r^{-2}) \quad \text{and}\quad h'(r)=O(r^{-3}).
\]

\begin{ex}\label{exH}
The boundedness of $|x^\perp|$ does not imply sublinear height
growth, even among complete, properly embedded minimal
hypersurfaces.

In fact, outside a compact set, $H$ is parametrized by
\[
x(r,\theta)
=
r\theta+h(r)\nu_{\mathcal C}(\theta),
\]
where $\theta\in\mathcal C\cap\mathbb S^7$. Since $\mathcal C$ is a cone, $\theta$ is tangent to $\mathcal C$,
$\langle\theta,\nu_{\mathcal C}\rangle=0$, and
$\nu_{\mathcal C}$ is invariant in the radial direction. Therefore,
a unit normal vector field along $H$ is
\[
\nu_H
=
\frac{\nu_{\mathcal C}-h'(r)\theta}
     {\sqrt{1+h'(r)^2}}.
\]
Thus, $|x^\perp| \to 0$ as $r\to\infty$. Indeed, 
\[
|x^\perp|
=
|\langle x,\nu_H\rangle|
=
\frac{|h(r)-rh'(r)|}
     {\sqrt{1+h'(r)^2}}
=
O(r^{-2}).
\]
Since $H$ is smooth on the remaining compact subset,
\[
\sup_H |x^\perp|<\infty.
\]

We now show that $H$ does not have sublinearly growing height.
Let
\[
P=\{y\in\mathbb R^8:\langle y,e\rangle=a\},
\qquad |e|=1,
\]
be any affine hyperplane. Since the Simons cone $\mathcal C$ is not
contained in any linear hyperplane, there exists
$\theta_0\in\mathcal C\cap\mathbb S^7$ such that $c:=\langle\theta_0,e\rangle\neq0$. Consider
\[
\gamma(r)
=
r\theta_0+h(r)\nu_{\mathcal C}(\theta_0)
\in H.
\]
Then $\langle\gamma(r),e\rangle-a
=
cr-a+O(r^{-2})$, and hence, for all sufficiently large $r$,
\[
\operatorname{dist}(\gamma(r),P)
\geq \frac{|c|}{2}r.
\]
Since $P$ was arbitrary, no rigid
motion can place $H$ inside a region of sublinear height.
\end{ex}

\begin{ex}\label{cat}
Conversely, sublinear height growth does not imply boundedness of
$|x^\perp|$, even among complete, properly embedded minimal
hypersurfaces.

Consider the classical catenoid
$M \subset\mathbb R^3$, parametrized by
\[
x(s,\theta)
=
\bigl(a\cosh(s)\theta,as\bigr),
\]
where $s\in\mathbb{R}$, $\theta\in\mathbb{S}^1$, $a>0$. Setting $t=a\cosh(s)$, we have $t=\sqrt{x_1^2+x_2^2}$.
Since the height of the catenoid relative to the plane $\{x_3=0\}$
grows logarithmically, it follows in particular that, for every
$\alpha\in(0,1)$,
\[
|x_3|=o(t^\alpha).
\]
Therefore, for all sufficiently large $t$,
\[
|x_3|
\leq
t^\alpha
=
(x_1^2+x_2^2)^{\alpha/2}.
\]
Thus the catenoid has sublinearly growing height.

A choice of unit normal vector field along
$M$ is
\[
\nu(s,\theta)
=
\bigl(\operatorname{sech}(s)\theta,-\tanh(s)\bigr).
\]
Therefore,
\[
\langle x,\nu\rangle
=
a\cosh(s)\operatorname{sech}(s)
-as\tanh(s)
=
a\bigl(1-s\tanh(s)\bigr).
\]
Passing to the limit as $|s| \to+\infty$, we conclude that
\[
\sup_{M}|x^\perp|=\infty.
\]
\end{ex}
We restrict the example to the case $n=2$, since this is the dimension in which the catenoid provides the desired counterexample. Indeed, although the higher-dimensional catenoid is contained in a slab and therefore has sublinear height growth, for $n\geq3$ the quantity $|x^\perp|$ is bounded. This follows directly from the parametrization of the catenoid and the integral representation of $s$ given in \cite{MR2515414}.

\section*{Acknowledgments}
The first author is grateful to Franciele Conrado and Matheus Vieira
for their interest in this work.

\bibliographystyle{plain}
\bibliography{refs}

\end{document}